\DeclareMathOperator*{\argmax}{arg\,max}
\DeclareMathOperator*{\argmin}{arg\,min}
  \theoremstyle{plain}
  \newtheorem{assumption}{Assumption}
\newtheorem{theorem}{Theorem}
\newtheorem{corollary}{Corollary}
\newtheorem{lemma}{Lemma}
\newtheorem{definition}{Definition}
\newcommand{\edit}[1]{\textcolor{black}{#1}}
\algnewcommand{\Inputs}[1]{%
  \State \textbf{Inputs:}
  \Statex \hspace*{\algorithmicindent}\parbox[t]{.8\linewidth}{\raggedright #1}
}
\algnewcommand{\Initialize}[1]{%
  \State \textbf{Initialize:}
  \Statex \hspace*{\algorithmicindent}\parbox[t]{.8\linewidth}{\raggedright #1}
}
\title{Collaborative Safety-Critical Control for Dynamically Coupled Networked Systems}
\date{} 					
\author{ \href{https://orcid.org/0000-0002-2489-4411}{\includegraphics[scale=0.06]{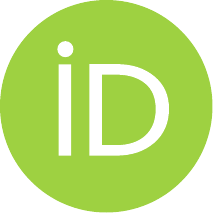}\hspace{1mm}Brooks A. Butler}\\
	Elmore Family School of Electrical and Computer Engineering\\
	Purdue University\\
	\texttt{brooksbutler@purdue.edu} \\
        \And
	\href{https://orcid.org/0000-0002-4095-7320}{\includegraphics[scale=0.06]{orcid.pdf}\hspace{1mm}Philip E. Par\'{e}}\thanks{This work was partially funded by Purdue’s Elmore Center for Uncrewed Aircraft Systems and the National Science Foundation, grant NSF-ECCS \#2238388.}\\
	Elmore Family School of Electrical and Computer Engineering\\
	Purdue University\\
	\texttt{philpare@purdue.edu} \\
}
\begin{document}
\maketitle

\begin{abstract}
As modern systems become ever more connected with complex dynamic coupling relationships, developing safe control methods becomes paramount. In this paper, we discuss the relationship of node-level safety definitions for individual agents with local neighborhood dynamics. We define a 
collaborative control barrier function (CCBF) and provide conditions under which sets defined by these functions will be forward invariant. We use collaborative node-level control barrier functions to construct a novel \edit{decentralized} algorithm for the safe control of collaborating network agents and provide conditions under which the algorithm is guaranteed to converge to a viable set of safe control actions for all agents. 
We illustrate these results on a networked susceptible-infected-susceptible (SIS) model.
\end{abstract}

\keywords{Network analysis and control \and Safety-critical control \and Cooperative control \and Constrained control}

\section{Introduction}

Networked dynamic systems have become ubiquitous in modern society, underpinning critical infrastructure, transportation, communication, and \edit{other} coupled processes. In many applications, effective and safe operation in networked systems is crucial, as disruptions in these interconnected systems can potentially have far-reaching societal and economic consequences. 
One great challenge in effectively controlling such networked systems is the rate at which the complexity scales with each additional node, where the computational complexity for even relatively simple networked models can become exponentially intractable as these networks grow (i.e., as the number of nodes increases). To overcome this scaling challenge, decentralized controllers are developed to scale with network growth. \edit{Decentralized} control laws become especially important when nodes represent independent agents with individual goals.
Additionally, when network agents are considered to be independent actors with individual objectives, cooperative control schemes allow for coordination between networked agents via active communication \citep{li2017cooperative,wang2017cooperative,yu2017distributed}, further emphasizing the importance of a \edit{decentralized} control law in such systems.

One common strategy used to tackle the high dimensionality and complexity of networked systems is to break down these potentially large systems into smaller, and therefore more readily solvable, components and provide methods for composing a solution for the entire system. This strategy has been applied in the compositional construction of barrier functions for networked systems \citep{nejati2022compositional, jahanshahi2022compositional, jagtap2020compositional} where systems are interconnected via input-output connections. Oftentimes, the input of other systems to an interconnected component is treated as a bounded disturbance, \edit{leveraging} small-gain theory when composing multiple barrier certificates~\citep{anand2022small}. Other work on composing barrier functions, although not explicitly for networked systems, uses geometric methods to compose multiple safety constraints on second-order systems \citep{breeden2022compositions}, which utilizes principles of higher-order barrier functions \citep{xiao2021high}. Work has also been presented on the composition of neural certificates for networked systems \citep{zhang2023compositional}, which uses deep learning to train scalable models with networked connections based on principles of control Lyapunov functions combined with a small-gain assumption.

Another method for computing safety certificates for systems involves the use of signal temporal logic (STL) \citep{donze2013signal, raman2014model, deshmukh2017robust, raman2015reactive, sadigh2016safe} which provides a natural syntax for ensuring that a dynamic system meets certain safety requirements for all time. These methods have been applied to networked systems with coupled dynamics to formulate both centralized and decentralized controllers under STL tasks \citep{lindemann2020barrier,lindemann2019control,lindemann2019decentralized} which can be applied to time-varying barrier functions. Additionally, work has been done using assume-guarantee contracts and parameterized signal temporal logic (pSTL) as an analogy for barrier certificates in networked systems with applications to power systems \citep{chen2020safety,chen2019compositional}. In these works, input from neighboring nodes in the network is treated as a bounded disturbance, which in the case of \citep{chen2020safety}, these disturbances are considered to be scalar and summable. Many of these works often focus on the definition of safety for the network as a whole and use compositional methods to efficiently compute centralized safety certificates \citep{song2022generalization}. There is also recent work that uses machine learning to train distributed controllers for scalable networked systems via graph neural networks \citep{zhou2020graph,wu2020comprehensive,scarselli2008graph} 
to implement safe multi-agent control \citep{fan2023graphCBF} that can be tested by neighbors without knowledge of the control inputs.

The field of cooperative control for multi-agent systems provides a rich body of literature that examines scenarios where agents may share information over a communication network \citep{li2017cooperative,wang2017cooperative,yu2017distributed}. In such formulations, agents typically share and receive information via either direct communication or broadcast that enables cooperative control adjustments to be made \citep{huang2010adaptive,li2021robust,qu2012analytic}. However, in many formulations of cooperative control, a common assumption is that agent first-order dynamics are independent of each other, thus the networked element is mainly facilitated via virtual communication. Contrasting in this work, we wish to include the class of explicitly coupled systems and leverage any knowledge of the networked dynamic structure in the formulation of safety requests.

While the current literature has explored some approaches for \edit{decentralized} safety-critical control of networked dynamic systems, the use of explicit coupled network dynamics for safety-critical control of individual network agents and the implementation of active collaboration between coupled agents to achieve independent safety goals is, to the best of our knowledge, an open problem. Therefore, \edit{in this work, we} 
\edit{leverage the knowledge of the networked dynamics to define a decentralized, collaborative safety condition 
and use this safety condition to construct an algorithm 
that facilitates the individual safety of each node while simultaneously satisfying neighbors' safety needs.}  

\section{Preliminaries} \label{sec:preliminaries}
In this section, we define the notation to be used in this paper, provide a general definition for the class of networked dynamic systems discussed in this work, and discuss safety definitions for networked systems including node-level definitions of safety and their relationship to network dynamics.

\subsection{Notation}
\noindent 
Let $\text{Int} \mathcal{C}$, $\partial \mathcal{C}$, $|\mathcal{C}|$ denote the interior, boundary, and cardinality of the set $\mathcal{C}$, respectively. $\mathbb{R}$ and $\mathbb{N}$ are the set of real numbers and positive integers, respectively. Let $D^r$ denote the set of functions $r$-times continuously differentiable in all arguments, and $\mathcal{K}$ the set of class-$\mathcal{K}$ functions. We define $[n] \subset \mathbb{N}$ to be a set of indices $\{1, 2, \dots, n\}$.
We define the Lie derivative of the function $h:\mathbb{R}^N \rightarrow \mathbb{R}$ with respect to the vector field generated by $f:\mathbb{R}^N \rightarrow \mathbb{R}^N$ as
\begin{equation}
    \mathcal{L}_f h(x) = \frac{\partial h(x)}{\partial x} f(x).
\end{equation}
We define higher-order Lie derivatives with respect to the same vector field $f$ with a recursive formula \citep{robenack2008computation}, where $k>1$, as
\begin{equation}
    \mathcal{L}^k_f h(x) = \frac{\partial \mathcal{L}^{k-1}_f h(x)}{\partial x} f(x).
\end{equation}

\subsection{Networked Dynamic System Model} \label{sec:sub:networked_model}
We define a networked system using a graph $\mathcal{G} = (\mathcal{V}, \mathcal{E})$, where $\mathcal{V}$ is the set of $n = \vert \mathcal{V} \vert$ nodes, $ \mathcal{E} \subseteq \mathcal{V}\times \mathcal{V} $ is the set of edges. Let $\mathcal{N}_i^+$ be the set of all neighbors with an incoming connection to node $i$, where 
\begin{equation}
    \mathcal{N}_i^+ = \{j \in [n]\setminus \{i\}: (i,j) \in \mathcal{E} \}.
\end{equation}
Similarly, all nodes with an outgoing connection from $i$ to $j$ are given by
\begin{equation}
    \mathcal{N}_i^- = \{j \in [n]\setminus \{i\}: (j,i) \in \mathcal{E} \},
\end{equation}
with the complete set of neighboring nodes given by
\begin{equation}
    \mathcal{N}_i = \mathcal{N}_i^+ \cup \mathcal{N}_i^-.
\end{equation}

\noindent Further, we define the state vector for each node as $x_i \in \mathbb{R}^{N_i}$, with $N = \sum_{i \in [n]} N_i$ being the state dimension of the entire system, $N_i^+ = \sum_{j \in \mathcal{N}_i^+} N_j$ the combined dimension of incoming neighbor states, and $x_{\mathcal{N}_i^+} \in \mathbb{R}^{N_i^+}$ denoting the combined state vector of all incoming neighbors.  Then, for each node~$i \in [n]$, we can describe its state dynamics, which are nonlinear, time-invariant, \edit{and control-affine}, as
\begin{equation} \label{eq:sys_dyn_net}
    \dot{x}_i = f_i(x_i, x_{\mathcal{N}_i^+}) + g_i(x_i) u_i,
\end{equation}
where $f_i:\mathbb{R}^{N_i + N_i^+} \rightarrow \mathbb{R}^{N_i}$ and $g_i:  \mathbb{R}^{N_i} \rightarrow \mathbb{R}^{N_i} \times \mathbb{R}^{M_i}$ locally Lipschitz for all $i \in [n]$, and $u_i \in \mathcal{U}_i \subset \mathbb{R}^{M_i}$. Note that in our formulation of a networked dynamic system, we include coupling effects and networked structure as an integral part of our control-free model dynamics $f_i$, rather than treating coupling connections as disturbances or noise. In this sense, we aim to exploit any information the network structure provides in our control design. \edit{For notational compactness, given a node~$i \in [n]$, we collect the 1-hop neighborhood state as $\mathbf{x}_i = (x_i, x_{\mathcal{N}_i^+})$ and the 2-hop neighborhood state as $\mathbf{x}_i^+ = (x_i, x_{\mathcal{N}_i^+}, x_{\mathcal{N}_j^+}~\forall j \in \mathcal{N}_i^+)$.} 

\subsection{Safety Definitions} \label{sec:sub:safety_def}
While the language and syntax of set invariance and barrier functions provide a mathematically succinct grammar for describing theoretical safety, defining safety in practice requires careful delineation between desired safety goals and actual viable safety in the defined state space with respect to the system dynamics. Additionally, when discussing definitions of safety for networked dynamic systems, one must consider the possibly differing definitions of safety for each node in the network. These \textit{node-level} safety definitions become even more relevant when considering networked models where nodes may be viewed as independent agents working to achieve individual or node-level goals. Thus, in the context of a networked model defined in Section~\ref{sec:sub:networked_model}, we define a node-level safety constraint for node~$i \in [n]$ with the set

\begin{equation}\label{eq:safe_set_i_fullstate}
    \begin{aligned}
        \mathcal{C}_i &= \left\{ \edit{x_i} \in \mathbb{R}^{N_i} : h_i(x_i) \geq 0 \right\},
    \end{aligned}
\end{equation}
where $h_i \in D^r, r\geq 1$ and $h_i:\mathbb{R}^{N_i} \rightarrow \mathbb{R}$ is a function whose zero-super-level set defines the region which node $i\in [n]$ considers to be safe (i.e., if $h_i(x_i)<0$, then node $i$ is no longer safe). 
We define the safety constraints for the entire networked system as
\begin{equation}\label{eq:safe_set_net_fullstate}
    \begin{aligned} 
        \mathcal{C} &= \edit{\mathcal{C}_1 \times \cdots \times \mathcal{C}_n}. \;
    \end{aligned}
\end{equation}

\noindent
Given the definitions of these safety constraints, we define the viable safe regions \citep{breeden2022compositions} for each node as follows.
\begin{definition}
A set $\mathcal{S}_i \subseteq \mathcal{C}_i$ is called a \textit{node-level viability domain} for node $i$ if for every point $x_i(t_0) \in \mathcal{S}_i$ there exist a control signal $u_i(t) \in \mathcal{U}_i$ 
with $t \in \mathcal{T}$ such that the trajectory $x_i(\cdot)$ of \eqref{eq:sys_dyn_net} satisfies $x_i(t) \in \mathcal{S}_i$ for all $t \in \mathcal{T}$.
\end{definition}

We define a similar notion for the entire networked system, where we compose the dynamics for the entire system as
\begin{equation} \label{eq:full_sys_dyn_net}
    \dot{x} = f(x) + g(x)u
\end{equation}
where $x \in \mathbb{R}^N$, $f:\mathbb{R}^N \rightarrow \mathbb{R}^N$, $g:\mathbb{R}^N \times \mathbb{R}^M \rightarrow \mathbb{R}^N$,  $M = \sum_{i \in [n]} M_i$,  and $u \in \mathcal{U} \subset \mathbb{R}^M$ with 
\begin{equation*}
    \mathcal{U} = \mathcal{U}_1 \times \cdots \times \mathcal{U}_n,
\end{equation*}
which contains all the control actions taken across the network.
\begin{definition}
 A set $\mathcal{S} \subseteq \mathcal{C}$ is called a \textit{network viability domain} if for every point $x(t_0) \in \mathcal{S}$ there exist control signals $u(t) \in \mathcal{U}$ with $t \in \mathcal{T}$ such that the trajectory $x(\cdot)$ of \eqref{eq:full_sys_dyn_net} satisfies $x(t) \in \mathcal{S}$ for all $t \in \mathcal{T}$.   
\end{definition}

Note that viability domains may be defined by an \textit{implicit barrier function} \citep{gurriet2020scalable} with respect to a defined backup set $\mathcal{S}^b \subseteq \mathcal{S}$ which the system can safely return to within a given time horizon. 

\subsection{\edit{Problem Statement}} \label{sec:problem_statement}

\edit{
With system dynamics and safety definitions defined, we are prepared to formalize the problem we aim to solve in this work as follows: Given a networked system with node-level dynamics defined by \eqref{eq:sys_dyn_net}, individual safety requirements defined by \eqref{eq:safe_set_i_fullstate}, and control constraints $\mathcal{U}_i \subset \mathbb{R}^{M_i}$, our objective is to design a decentralized control law $u_i(t)$, for all $i\in [n]$, that enforces \textit{forward invariance} on $\mathcal{C}_i$.
}

\section{Safety in Networked Dynamic Systems} \label{sec:safe_w_local_control}

One challenge that comes from the definition of node-level safety constraints as defined in Section~\ref{sec:sub:safety_def}, which depend only on the state of the given node, is that when evaluating the derivative of $h_i(x_i)$ we must incorporate the network influence of its neighbors $j \in \mathcal{N}_i^+$, where we define the derivative of $h_i$ as
\begin{equation*}
    \dot{h}_i(\edit{\mathbf{x}_i}, u_i) = \mathcal{L}_{f_i} h_i(\edit{\mathbf{x}_i}) + \mathcal{L}_{g_i} h_i(x_i) u_i. 
\end{equation*}
\noindent
Notice that in this case, we can define the node-level constraint function as the mapping $h_i:\mathbb{R}^{N_i} \rightarrow \mathbb{R}$; however, the dynamics of node $i$, whose dynamics are a function of all neighboring nodes in $\mathcal{N}_i^+$, require that $\dot{h}_i:\mathbb{R}^{N_i+N_i^+} \rightarrow \mathbb{R}$. 

Therefore, even though we define $h_i$ with respect to only $x_i \in \mathbb{R}^{N_i}$, when computing the Lie derivative we must compute the Jacobian with respect to all network states $\frac{\partial h_i(x_i)}{\partial x}$, which naturally zeros out all states of nodes $j \notin \mathcal{N}_i^+ \cup \{i\}$ when taking only the first derivative, but is relevant when we must consider higher-order derivatives of the constraint function $h_i$. 

Recall the definition of \textit{high-order barrier functions} (HOBF) \citep{xiao2021high}, where a series of functions are defined in the following form
\begin{equation} \label{eq:HO_funcs}
    \begin{aligned}
        \psi_i^0(x) &:= h_i(x) \\
        \psi_i^1(x) &:= \dot{\psi}_i^0(x) + \alpha_i^1(\psi_i^0(x)) \\
        & \vdots \\
        \psi_i^k(x) &:= \dot{\psi}_i^{k-1}(x) + \alpha_i^k(\psi_i^{k-1}(x)),
    \end{aligned}
\end{equation}
where $\alpha_i^1(\cdot),\alpha_i^1(\cdot), \dots, \alpha_i^k(\cdot)$ denote class-$\mathcal{K}$ functions of their argument. These functions provide definitions for the corresponding series of sets
\begin{equation} \label{eq:HO_sets}
    \begin{aligned}
        \mathcal{C}_i^1 &:= \{ x \in \mathbb{R}^N: \psi_i^0(x) \geq 0 \} \\
        \mathcal{C}_i^2 &:= \{ x \in \mathbb{R}^N: \psi_i^1(x) \geq 0 \} \\
        & \vdots \\
        \mathcal{C}_i^k &:= \{ x \in \mathbb{R}^N: \psi_i^{k-1}(x) \geq 0 \},
    \end{aligned}
\end{equation}
which yield the following definition.
\begin{definition}
    Let $\mathcal{C}_i^1, \mathcal{C}_i^2, \dots, \mathcal{C}_i^k$ be defined by \eqref{eq:HO_funcs} and \eqref{eq:HO_sets}. We have that $h_i$ is a \textit{\edit{$k^{th}$-order} node-level barrier function} (NBF) for node $i\in [n]$ if $h_i \in C^k$ and there exist differentiable class-$\mathcal{K}$ functions $\alpha_i^1,\alpha_i^2,\dots, \alpha_i^k$ such that $\psi_i^k(x) \geq 0$ for all $x \in \bigcap_{r=1}^k \mathcal{C}_i^r$. 
\end{definition}

This definition leads naturally to the following lemma (which is a direct result of Theorem 4 in \citep{xiao2021high}).
\begin{lemma} \label{lem:NBF}
    If $h_i$ is an NBF, the set $\bigcap_{r=1}^k \mathcal{C}_i^r$ is forward invariant.
\end{lemma}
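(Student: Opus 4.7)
The plan is to mirror the proof of Theorem~4 in \citep{xiao2021high} by a downward induction along the chain $\psi_i^k, \psi_i^{k-1}, \dots, \psi_i^0 = h_i$. The NBF hypothesis gives $\psi_i^k(x) \geq 0$ on $\bigcap_{r=1}^k \mathcal{C}_i^r$, and since by construction
\begin{equation*}
    \psi_i^k(x) = \dot{\psi}_i^{k-1}(x) + \alpha_i^k(\psi_i^{k-1}(x)),
\end{equation*}
this rearranges to the differential inequality $\dot{\psi}_i^{k-1}(x) \geq -\alpha_i^k(\psi_i^{k-1}(x))$ along trajectories of \eqref{eq:sys_dyn_net}. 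Because $\alpha_i^k$ is a differentiable class-$\mathcal{K}$ function and the vector field is locally Lipschitz, a standard comparison-lemma argument (equivalently, the Nagumo boundary condition applied to the zero-super-level set of $\psi_i^{k-1}$) yields that any trajectory starting with $\psi_i^{k-1}(x(t_0)) \geq 0$ satisfies $\psi_i^{k-1}(x(t)) \geq 0$ for all $t \geq t_0$ in the maximal interval of existence.

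Having established forward invariance of $\mathcal{C}_i^k$, I would then peel off one layer at a time. On the invariant set just obtained, $\psi_i^{k-1}(x) \geq 0$ unpacks to $\dot{\psi}_i^{k-2}(x) + \alpha_i^{k-1}(\psi_i^{k-2}(x)) \geq 0$, and the same comparison argument applied to $\psi_i^{k-2}$ shows $\mathcal{C}_i^{k-1}$ is forward invariant. Iterating this for $r = k-2, k-3, \dots, 1$ propagates invariance all the way down to $\mathcal{C}_i^1 = \{x : h_i(x) \geq 0\}$. Since all $k$ sets are forward invariant along trajectories of \eqref{eq:sys_dyn_net} simultaneously, their intersection $\bigcap_{r=1}^k \mathcal{C}_i^r$ is forward invariant as well.

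The main obstacle I anticipate is the bookkeeping of the induction: each comparison step only concludes invariance of a single layer, but the hypothesis $\psi_i^k \geq 0$ at the top of the chain was granted only on the \emph{full} intersection. The clean way to handle this is to do the induction \emph{downward} rather than upward, observing that once $\psi_i^{r-1} \geq 0$ is established as invariant, the differential inequality on $\psi_i^{r-2}$ holds along the trajectory without requiring knowledge of the lower layers; the invariance of each upper layer never relies on the invariance of the layers below it. Since the statement is advertised as a direct consequence of \citep[Theorem~4]{xiao2021high}, the cleanest exposition is to cite that result after noting that the only difference is notational: $h_i$ here depends solely on $x_i$, but the Lie derivatives entering $\psi_i^r$ are taken with respect to the full networked field $f$, so the cited result applies verbatim.
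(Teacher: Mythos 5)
Your argument is correct and matches the paper's approach: the paper offers no proof of its own, simply invoking Theorem~4 of \citep{xiao2021high}, which is exactly the downward comparison-lemma induction along $\psi_i^k, \psi_i^{k-1}, \dots, \psi_i^0$ that you reconstruct before concluding that citing the theorem suffices. One small caveat in your bookkeeping remark: the NBF hypothesis grants $\psi_i^k(x) \geq 0$ only on the \emph{full} intersection $\bigcap_{r=1}^k \mathcal{C}_i^r$ (lower layers included), so the top-level differential inequality does formally depend on the trajectory remaining in every layer; the standard first-exit-time argument (run all $k$ comparison inequalities simultaneously on the maximal interval where the trajectory stays in the intersection and conclude by continuity that it never leaves) closes this, just as in the cited theorem.
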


In this sense, under Lemma~\ref{lem:NBF} we may consider
\begin{equation} \label{eq:node_lvl_viab}
    \mathcal{S}_i = \bigcap_{r=1}^k \mathcal{C}_i^r \subseteq \mathcal{C}_i
\end{equation}
to be a node-level viability domain of $\mathcal{C}_i$ with respect to the $(k-1)$-hop neighborhood dynamics of node~$i \in [n]$. 

\edit{Thus, to analyze the effect of the 1-hop neighborhood dynamics on the safety of node~$i$, we must compute the second-order derivative of $h_i$} with respect to the network dynamics defined by \eqref{eq:sys_dyn_net}, \edit{which can be expressed as}

\begin{equation} \label{eq:sec_der_h_i_control_aff}
    \begin{aligned}
        \ddot{h}_i(\edit{\mathbf{x}_i^+,u_i, u_{\mathcal{N}_i^+}, \dot{u}_i}) &= \sum_{j \in \mathcal{N}_i^+} \big[ \mathcal{L}_{f_j} \mathcal{L}_{f_i}  h_i(\edit{\mathbf{x}_i^+}) +  \mathcal{L}_{g_j} \mathcal{L}_{f_i}  h_i(\edit{\mathbf{x}_i}) u_j \big] \\
         &
         \quad 
         + \mathcal{L}^2_{f_i}  h_i(\edit{\mathbf{x}_i}) +  u_i^\top \mathcal{L}^2_{g_i} h_i(x_i) u_i \edit{+ \mathcal{L}_{g_i}h_i(x_i)\dot{u}_i} \\ 
         &
         \quad 
         + \big(\mathcal{L}_{f_i} \mathcal{L}_{g_i}h_i(\edit{\mathbf{x}_i})^\top +  \mathcal{L}_{g_i} \mathcal{L}_{f_i}h_i(\edit{\mathbf{x}_i})\big)u_i.
    \end{aligned}
\end{equation}

\noindent
Notice that the dynamics of each neighbor $j \in \mathcal{N}_i^+$ 
\edit{
and $\dot{u}_i$ appear in the second-order differential expression of $h_i$. To assist in our analysis of the high-order dynamics of $h_i$, we make the following assumption.
}
\edit{
\begin{assumption} \label{assume:u_dot_func_u}
    For a given node~$i\in [n]$, let $\dot{u}_i := d(u_i)$, where $d(u_i): \mathbb{R}^{M_i} \rightarrow \mathbb{R}^{M_i}$ is locally Lipschitz.
\end{assumption}
}
\edit{
While obtaining a closed-form solution for $\dot{u}_i$ may be challenging in some applications, in practice, $d(u_i)$ may be approximated using discrete-time methods. 
}
For the purposes of implementing a \edit{decentralized} control law, we need only consider the 1-hop neighborhood dynamics of any given node, which may then be applied recursively across the entire network. Additionally, since control is applied locally at each node, we have that the combined control for $h_i$ is of \textit{mixed relative degree} due to the control input of neighbors appearing in the second-order derivative of $h_i$. Given this structure in the 1-hop neighborhood controlled dynamics, we construct a similar series of functions to \eqref{eq:HO_funcs} for the second-order system for the 1-hop neighborhood of node~$i \in [n]$, \edit{under Assumption~\ref{assume:u_dot_func_u}, as}
\begin{equation} \label{eq:MO_funcs}
    \begin{aligned}
        \psi_i^0(x_i) &:= h_i(x_i) \\
        \psi_i^1(\edit{\mathbf{x}_i},u_i) &:= \dot{\psi}_i^0(\edit{\mathbf{x}_i},u_i) + \eta_i(\psi_i^0(x_i)) \\
        \psi_i^2(\edit{\mathbf{x}_i^+},u_i,u_{\mathcal{N}_i^+}) &:= \dot{\psi}_i^1(\edit{\mathbf{x}_i^+},u_i,u_{\mathcal{N}_i^+}) + \kappa_i(\psi_i^1(\edit{\mathbf{x}_i}, u_i)),
    \end{aligned}
\end{equation}
where $\eta_i,\kappa_i$ are class-$\mathcal{K}$ functions. 
We can also express $\psi_i^2(\edit{\mathbf{x}_i^+},u_i,u_{\mathcal{N}_i^+})$ in terms of $h_i$ as
\begin{equation} \label{eq:psi2_terms_h}
    \begin{aligned}
        \psi_i^2(\edit{\mathbf{x}_i^+},u_i,u_{\mathcal{N}_i^+}) &= \ddot{h}_i(\edit{\mathbf{x}_i^+},u_i,u_{\mathcal{N}_i^+}) + \dot{\eta}_i(h_i(x_i),\edit{\mathbf{x}_i},u_i) \\
        & \quad + \kappa_i\big(\dot{h}_i(\edit{\mathbf{x}_i},u_i) + \eta_i(h_i(x_i)) \big).
    \end{aligned}
\end{equation}
Further, we may rewrite \eqref{eq:psi2_terms_h} by collecting the terms independent of $u_j$ as

\begin{equation} \label{eq:psi2_grouped_terms}
     \psi_i^2(\edit{\mathbf{x}_i^+},u_i,u_{\mathcal{N}_i^+}) = \sum_{j \in \mathcal{N}_i^+} a_{ij}(\edit{\mathbf{x}_i}) u_j  +  c_i(\edit{\mathbf{x}_i^+},u_i),
\end{equation}
where
\begin{equation} \label{eq:a_ij}
    a_{ij}(\edit{\mathbf{x}_i}) = \mathcal{L}_{g_j} \mathcal{L}_{f_i}  h_i(\edit{\mathbf{x}_i})
\end{equation}
and
\begin{equation} \label{eq:capability_node_i}
    \begin{aligned}
        c_i(x,u_i) &= \sum_{j \in \mathcal{N}_i^+} \mathcal{L}_{f_j} \mathcal{L}_{f_i}  h_i(\edit{\mathbf{x}_i^+}) + \mathcal{L}^2_{f_i} h_i(\edit{\mathbf{x}_i}) + u_i^\top \mathcal{L}^2_{g_i} h_i(x_i) u_i \\
        & 
        \quad 
        + \big(\mathcal{L}_{f_i} \mathcal{L}_{g_i}h_i(\edit{\mathbf{x}_i})^\top +  \mathcal{L}_{g_i} \mathcal{L}_{f_i}h_i(\edit{\mathbf{x}_i})\big)u_i \edit{+ \mathcal{L}_{g_i}h_i(x_i)d(u_i)} 
        \\ 
        & 
        \quad 
        + \dot{\eta}_i\big(h_i(x_i),\edit{\mathbf{x}_i},u_i \big) + \kappa_i\big(\dot{h}_i(\edit{\mathbf{x}_i},u_i) + \eta_i(h_i(x_i)) \big).
    \end{aligned}
\end{equation}

\noindent
In this form, we may consider $a_{ij}(\edit{\mathbf{x}_i}) \in \mathbb{R}^{M_j}$ to be the effect that node $j \in \mathcal{N}_i^+$ has on the safety of node~$i \in [n]$ via its own control inputs $u_j$, and $c_i(\edit{\mathbf{x}_i^+},u_i)$ is the effect that node~$i \in [n]$ has on its own safety combined with the uncontrolled system dynamics. 
The functions in \eqref{eq:MO_funcs} in turn define the constraint sets
\begin{equation} \label{eq:MO_sets}
    \begin{aligned}
        \mathcal{C}_i^1 &= \{\edit{\mathbf{x}_i \in  \mathbb{R}^{N_i+N_i^+}}: \psi_i^0(x_i) \geq 0 \} \\
        \mathcal{C}_i^2 &= \{\edit{\mathbf{x}_i \in  \mathbb{R}^{N_i+N_i^+}}: \exists u_i \in \mathcal{U}_i \text{ s.t. } \psi_i^1(\edit{\mathbf{x}_i}, u_i) \geq 0 \}.
    \end{aligned}
\end{equation}

We may now formally define a collaborative control barrier function for node $i$ that takes into account the control actions of its incoming neighbors $j \in \mathcal{N}_i^+$.

\begin{definition}
    Let $\mathcal{C}_i^1$ and $\mathcal{C}_i^2$ be defined by \eqref{eq:MO_funcs} and \eqref{eq:MO_sets}, \edit{under Assumption~\ref{assume:u_dot_func_u}}. We have that $h_i$ is a \textit{collaborative control barrier function} (CCBF) for node~$i \in [n]$ if $h_i \in C^2$ and $\forall \edit{\mathbf{x}_i} \in \mathcal{C}_i^1 \cap \mathcal{C}_i^2$ there exists $(u_i, u_{\mathcal{N}_i^+}) \in \mathcal{U}_i \times \mathcal{U}_{\mathcal{N}_i^+}$ such that 
    \begin{equation} \label{eq:cNCBF_cond}
        \psi_i^2(\edit{\mathbf{x}_i^+}, u_i, u_{\mathcal{N}_i^+}) \geq 0,
    \end{equation}
    where $\eta_i, \kappa_i$ are class-$\mathcal{K}$ functions and $\eta_i \in D^r$, with $r \geq 1$.
\end{definition}

This definition yields the corresponding result for the forward invariance of the constraint sets in \eqref{eq:MO_sets}.
\begin{theorem}\label{thm:cNCBF}
    Given a networked dynamic system defined by \eqref{eq:sys_dyn_net} and constraint sets defined by \eqref{eq:MO_funcs} and \eqref{eq:MO_sets}, \edit{under Assumption~\ref{assume:u_dot_func_u}}, if $h_i$ is a CCBF then $\mathcal{C}_i^1 \cap \mathcal{C}_i^2$ is forward invariant. 
\end{theorem}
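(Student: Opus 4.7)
My plan is to adapt the classical high-order barrier function (HOBF) cascade used in Lemma~\ref{lem:NBF} to the mixed relative degree situation created by the CCBF construction. The key structural point is that $u_i$ enters $\psi_i^1$ directly (relative degree one with respect to $h_i$), whereas the neighbor inputs $u_{\mathcal{N}_i^+}$ and $\dot{u}_i$ first appear in $\psi_i^2$. Assumption~\ref{assume:u_dot_func_u} is what makes this cascade meaningful along trajectories: since $\dot{u}_i = d(u_i)$ is Lipschitz in $u_i$, the composite map $t \mapsto \psi_i^1(\mathbf{x}_i(t), u_i(t))$ is absolutely continuous under a locally Lipschitz selection of $u_i$, so its time derivative is well-defined and equals exactly the expression in \eqref{eq:MO_funcs} that defines $\psi_i^2$.

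The proof then reduces to applying the Comparison Lemma twice in cascade. First, fix $\mathbf{x}_i(t_0) \in \mathcal{C}_i^1 \cap \mathcal{C}_i^2$. By the CCBF property, for every reachable 2-hop state one may select a pair $(u_i, u_{\mathcal{N}_i^+}) \in \mathcal{U}_i \times \mathcal{U}_{\mathcal{N}_i^+}$ making $\psi_i^2(\mathbf{x}_i^+, u_i, u_{\mathcal{N}_i^+}) \geq 0$. Using \eqref{eq:MO_funcs} and Assumption~\ref{assume:u_dot_func_u}, this rearranges to the scalar differential inequality $\dot{\psi}_i^1 \geq -\kappa_i(\psi_i^1)$ along the closed-loop trajectory. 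Since $\kappa_i \in \mathcal{K}$ and $\psi_i^1(\mathbf{x}_i(t_0), u_i(t_0)) \geq 0$ (because $\mathbf{x}_i(t_0) \in \mathcal{C}_i^2$), the Comparison Lemma gives $\psi_i^1(t) \geq 0$ for all $t \geq t_0$, i.e., $\mathbf{x}_i(t) \in \mathcal{C}_i^2$.

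Repeating the argument one level down, $\psi_i^1 \geq 0$ is by definition $\dot{h}_i + \eta_i(h_i) \geq 0$, i.e., $\dot{\psi}_i^0 \geq -\eta_i(\psi_i^0)$. With $\eta_i \in \mathcal{K}$ and $\psi_i^0(x_i(t_0)) = h_i(x_i(t_0)) \geq 0$, a second application of the Comparison Lemma yields $h_i(x_i(t)) \geq 0$, hence $\mathbf{x}_i(t) \in \mathcal{C}_i^1$. Together, $\mathbf{x}_i(t) \in \mathcal{C}_i^1 \cap \mathcal{C}_i^2$ for all $t \geq t_0$, which is forward invariance.

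The main obstacle I anticipate is not either comparison step itself but the selection issue hiding behind the existential quantifier in the CCBF definition and in $\mathcal{C}_i^2$. To legitimately differentiate $\psi_i^1$ and invoke the Comparison Lemma, I need a measurable (ideally Lipschitz) control law $t \mapsto (u_i(t), u_{\mathcal{N}_i^+}(t))$ that pointwise satisfies $\psi_i^2 \geq 0$ so that the closed-loop system admits a Carath\'eodory solution. This is plausible because $a_{ij}$, $c_i$, and $d$ inherit local Lipschitzness from \eqref{eq:sys_dyn_net} and Assumption~\ref{assume:u_dot_func_u}, so a min-norm or QP-based selection should produce an admissible law; however, writing the proof rigorously would require acknowledging this selection step rather than glossing over it, and the subsequent algorithmic sections of the paper presumably supply the concrete construction that justifies it.
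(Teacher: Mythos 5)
Your proof takes essentially the same route as the paper: both cascade the CCBF condition $\psi_i^2 \geq 0$ down through $\dot{\psi}_i^1 \geq -\kappa_i(\psi_i^1)$ to conclude $\psi_i^1 \geq 0$, and then through $\dot{\psi}_i^0 \geq -\eta_i(\psi_i^0)$ to conclude $h_i \geq 0$; the paper phrases each step as a Nagumo-style boundary argument (showing $\dot{\psi}_i^1 \geq 0$ wherever $\psi_i^1 = 0$) where you invoke the Comparison Lemma, which is an interchangeable technicality. The measurable-selection issue you flag is real, but the paper's own proof does not address it either, so on that point you are, if anything, more careful than the source.
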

\begin{proof}
    If $h_i$ is a CCBF, then, \edit{by Assumption~\ref{assume:u_dot_func_u},} $\exists(u_i, u_{\mathcal{N}_i^+}) \in \mathcal{U}_i \times \mathcal{U}_{\mathcal{N}_i^+}$ such that \eqref{eq:cNCBF_cond} holds, i.e., we have that there exists a class-$\mathcal{K}$ function $\kappa_i$ such that
    \begin{equation*}
        \dot{\psi}_i^1(\edit{\mathbf{x}_i^+},u_i,u_{\mathcal{N}_i^+}) + \kappa_i(\psi_i^1(\edit{\mathbf{x}_i}, u_i)) \geq 0, \forall \edit{\mathbf{x}_i} \in \mathcal{C}_i^1 \cap \mathcal{C}_i^2.
    \end{equation*}
    Since $u_i$ appears in both $\psi_i^2(\edit{\mathbf{x}_i^+}, u_i, u_{\mathcal{N}_i^+})$ and $\psi_i^1(\edit{\mathbf{x}_i}, u_i)$, we must show that if $\edit{\mathbf{x}_i} \in \mathcal{C}_i^1 \cap \mathcal{C}_i^2$ and $\psi_i^2(\edit{\mathbf{x}_i^+}, u_i, u_{\mathcal{N}_i^+}) \geq 0$ for some $u_i \in \mathcal{U}_i$, then $\psi_i^1(\edit{\mathbf{x}_i}, u_i) \geq 0$ also. If \eqref{eq:cNCBF_cond} holds for all $\edit{\mathbf{x}_i} \in \mathcal{C}_i^1 \cap \mathcal{C}_i^2$, then $\forall x_i, x_{\mathcal{N}_i^+}, u_i \in \mathbb{R}^{N_i}\times \mathbb{R}^{N_i^+}\times \mathbb{R}^{M_i}$ where $\psi_i^1(\edit{\mathbf{x}_i}, u_i) = 0$, there exists $u_{\mathcal{N}_i^+} \in \mathbb{R}^{M_i^+}$ such that $\dot{\psi}_i^1(\edit{\mathbf{x}_i^+},u_i,u_{\mathcal{N}_i^+}) \geq 0$. Thus, we have $\psi_i^1(\edit{\mathbf{x}_i}, u_i) \geq 0, \forall \edit{\mathbf{x}_i} \in \mathcal{C}_i^1 \cap \mathcal{C}_i^2$, which directly implies $\psi_i^0(x_i) \geq 0, \forall \edit{\mathbf{x}_i} \in \mathcal{C}_i^1 \cap \mathcal{C}_i^2$. Therefore, we have that $\mathcal{C}_i^1 \cap \mathcal{C}_i^2$ is forward invariant.
\end{proof}

Notice, however, that the definition of the CCBF is still centered around node $i$ and its incoming 1-hop neighbors. In order to design a \edit{decentralized} control scheme for each node, we need to account for both the needs of node $i$ with respect to its incoming neighbors $j \in \mathcal{N}_i^+$ and the needs of its outgoing neighbors $k \in \mathcal{N}_i^-$ with respect to control actions made by node $i$. Thus, we propose a \edit{decentralized} collaborative control algorithm in the following section that ensures safety throughout a given networked dynamic system of the form in \eqref{eq:sys_dyn_net} via rounds of communication between neighboring nodes.

\section{Collaborative Safety} \label{sec:colab_safety}
In this section, we construct an algorithm that exploits the properties of the CCBF to communicate and process safety requests to and from neighbors, respectively.  
Typically, safety-critical control aims to minimally alter nominal control commands such that the controlled system is always safe. For example, given some nominal command control policies $u_i^{n}: \mathbb{R}^N \rightarrow \mathcal{U}_i$ and $u_{\mathcal{N}_i^+}^{n}: \mathbb{R}^N \rightarrow \mathcal{U}_{\mathcal{N}_i^+}$ for node~$i \in [n]$ and its neighbors $j \in \mathcal{N}_i^+$, respectively, as well as a valid cNCBF $h_i$, our safe control $u_i^s, u_{\mathcal{N}_i^+}^s$ may be computed as
\begin{equation} \label{eq:min_safe_control_colab}
    \begin{aligned}
        \argmin_{u_i, u_{\mathcal{N}_i^+} \in \mathcal{U}_i \times \mathcal{U}_{\mathcal{N}_i^+}} \quad & {\Vert u_i - u_i^{n} \Vert}^2 +{\Vert u_{\mathcal{N}_i^+} - u_{\mathcal{N}_i^+}^{n} \Vert}^2 \\
        \text{s.t.} \quad & \psi_i^2(\edit{\mathbf{x}_i^+}, u_i, u_{\mathcal{N}_i^+}) \geq 0.
    \end{aligned}
\end{equation}
However, ensuring that $h_i$ is a valid CCBF in practice may be infeasible since \eqref{eq:min_safe_control_colab} does not account for the effects of the incoming neighbors $\mathcal{N}_j^+, \forall j \in \mathcal{N}_i^+$. Therefore, in order to consider the needs of nodes in $\mathcal{N}_i^-$, we propose Algorithm~\ref{alg:colab_safety}, which uses rounds of communication to request shared responsibility for safety among incoming neighbors $\mathcal{N}_i^+$ while receiving requests from outgoing neighbors to determine feasible control constraints for node~$i \in [n]$.

\subsection{Algorithm Construction}
We now discuss the construction of Algorithm~\ref{alg:colab_safety}\edit{, its subroutines,} and the properties of its convergence to viable sets of constrained control actions that assist in guaranteeing neighbor safety.
The central idea of Algorithm~\ref{alg:colab_safety} involves rounds of \edit{collaboration} between nodes, where each round of \edit{collaboration} between nodes, centered on a node $i\in [n]$, involves the following steps:
\begin{enumerate}
    \item Receive (Send) requests from (to) neighboring nodes in $\mathcal{N}_i^-$ $\left( \mathcal{N}_i^+ \right)$
    \item \edit{Coordinate} requests and determine needed compromises for nodes in $\mathcal{N}_i^-$
    \item Send (Receive) adjustments to (from) neighboring nodes in $\mathcal{N}_i^-$ $\left( \mathcal{N}_i^+ \right)$.
\end{enumerate}
By \eqref{eq:psi2_grouped_terms} and \eqref{eq:cNCBF_cond}, the condition that node~$i \in [n]$ must satisfy to guarantee safety for a given \edit{2-hop neighborhood state} is
\begin{equation}
    \sum_{j \in \mathcal{N}_i^+} a_{ij}(\edit{\mathbf{x}_i}) u_j  +  c_i(\edit{\mathbf{x}_i^+}, u_i) \geq 0.
\end{equation}

\noindent
Thus, \edit{under Assumption~\ref{assume:u_dot_func_u},} we compute the maximum capability of node~$i \in [n]$ to achieve its safety constraints
\begin{equation}
\label{eq:max_cap}
    \bar{c}_i = \max_{u_i \in \mathcal{U}_i} c_i(\edit{\mathbf{x}_i^+}, u_i),
\end{equation}

\noindent
where we may interpret $\bar{c}_i$ as the total responsibility of node~$i \in [n]$, with $\bar{c}_i>0$ indicating a surplus and $\bar{c}_i<0$ a deficit of control capability at node $i\in [n]$. We then partition control responsibility among incoming neighbors $\bar{c}_{ij}, \forall j \in \mathcal{N}_i^+$ such that $\bar{c}_i = \sum_{j \in \mathcal{N}_i^+} \bar{c}_{ij}$, where each incoming neighbor is responsible for satisfying
\begin{equation}\label{eq:neighbor_responsibility}
    a_{ij}(\edit{\mathbf{x}_i}) u_j + \bar{c}_{ij} \geq 0.
\end{equation}

\noindent
Since there are an infinite \edit{number of} partitions of $\bar{c}_i$, we must choose a method for dividing responsibility among each neighbor $j \in \mathcal{N}_i^+$ for $i \in [n]$
\begin{equation} \label{eq:request_c}
    \bar{c}_{ij} = \frac{\bar{c}_{i} w_{ij}}{\sum_{j \in \mathcal{N}_i^+} w_{ij}},
\end{equation}
where $w_{ij}$ is given by a weighting function $W_i: \mathcal{N}_i^+ \rightarrow \mathbb{R}_{\geq 0}$ that determines how much responsibility node $i$ requests from its available neighbors. An example of a weighting function candidate is
\begin{equation} \label{eq:W_i}
    W_{i}(j) = |a_{ij}(\edit{\mathbf{x}_i})|; \forall i \in [n], \forall j \in \mathcal{N}_i^+,
\end{equation}
which allocates responsibility to neighbors $\mathcal{N}_i^+$ based on the magnitude effect (i.e. the 1-norm of $a_{ij}(\edit{\mathbf{x}_i})$) of their actions on node $i$.
Therefore, if $\exists u_j \in \mathcal{U}_j$ such that \eqref{eq:neighbor_responsibility} is satisfied for all $j \in \mathcal{N}_i^+$ for all $\edit{\mathbf{x}_i} \in \mathcal{C}_i^1 \cap \mathcal{C}_i^2$, then $h_i$ is a CCBF and $\overline{\mathcal{U}}_j = \{ u_j \in \mathcal{U}_j: \eqref{eq:neighbor_responsibility} \}$ is a viable set of control inputs for node $j \in \mathcal{N}_i^+$ that will keep node $i\in [n]$ safe.

However, \eqref{eq:neighbor_responsibility} may not always be feasible for all $j \in \mathcal{N}_i^+$, which would require an adjustment of the responsibility allocated to node $j$. To illustrate the process for updating the responsibility, we shift our perspective back to node~$i \in [n]$ and consider requests $\bar{c}_{ki}$ received from $k \in \mathcal{N}_i^-$. In order to satisfy the requests of outgoing neighbors, node $i$ must ensure that
\begin{equation}
    \bigwedge_{k \in \mathcal{N}_i^-} \big(a_{ki}(\edit{\mathbf{x}_k}) u_i + \bar{c}_{ki} \geq 0 \big)
\end{equation}
holds, given $u_i \in \mathcal{U}_i$. Thus, we may compute constraints on $u_i$ according to a series of linear equations which define the super-level sets
\begin{equation}\label{eq:in_neighbor_constraint_k}
    \overline{\mathcal{U}}_{ki} = \{ u_i \in \mathbb{R}^{M_i}: a_{ki}(\edit{\mathbf{x}_k}) u_i + \bar{c}_{ki} \geq 0 \}.
\end{equation}
Note that \eqref{eq:in_neighbor_constraint_k} does not yet constrain $u_i$ by $\mathcal{U}_i$, rather it is the set of all possible control actions that satisfy the request of node $k \in \mathcal{N}_i^-$. We then take the intersection of all requested constraints
\begin{equation}\label{eq:in_neighbor_constraint_all}
    \overline{\mathcal{U}}_{\mathcal{N}_i^-} = \bigcap_{k \in \mathcal{N}_i^-} \overline{\mathcal{U}}_{ki}.
\end{equation}
If $\mathcal{U}_i \cap \overline{\mathcal{U}}_{\mathcal{N}_i^-}$ is nonempty, then $\overline{\mathcal{U}}_i = \mathcal{U}_i \cap \overline{\mathcal{U}}_{\mathcal{N}_i^-}$ is a valid constrained control set and no adjustments need to be sent. Otherwise, if  $\mathcal{U}_i \cap \overline{\mathcal{U}}_{\mathcal{N}_i^-} = \emptyset$ and $\overline{\mathcal{U}}_{\mathcal{N}_i^-} \neq \emptyset$, then we may compute the action $\overline{u}_i \in \partial \mathcal{U}_i$ that yields the minimum distance between the two hulls $\mathcal{U}_i$ and $\overline{\mathcal{U}}_{\mathcal{N}_i^-}$ \citep{kaown2009fast}, respectively, and use this point to compute needed adjustments for neighbors in $\mathcal{N}_i^-$ where if
\begin{equation} \label{eq:adjust_request}
    a_{ki}(\edit{\mathbf{x}_k})\overline{u}_i + \bar{c}_{ki} < 0,
\end{equation}
then we must request an adjustment $\varepsilon_{ki} > 0$ such that
\begin{equation} \label{eq:adjsut_request_needed}
    a_{ki}(\edit{\mathbf{x}_k})\overline{u}_i + \bar{c}_{ki} + \varepsilon_{ki} = 0,
\end{equation}
and send this adjustment back to each node $k \in \mathcal{N}_i^-$ where \eqref{eq:adjust_request} is true.

Shifting our perspective again to consider adjustments $\varepsilon_{ij}$ requested from incoming neighbors $\mathcal{N}_i^+$, we must update the original request $\bar{c}_{ij} \gets \bar{c}_{ij} + \varepsilon_{ij}$ for all neighbors and compute the deficit in control responsibility
\begin{equation} \label{eq:control_deficit}
    \delta_i = \bar{c}_i - \sum_{j \in \mathcal{N}_i^+}\bar{c}_{ij},
\end{equation}
which we can use to make another round of requests to all neighbors that are not yet constrained. We repeat this process until either no adjustments are needed, or until all neighbors, $j \in \mathcal{N}_i^+$ are constrained.

Finally, we check our current capability $\bar{c}_i =  \max_{u_i \in \overline{\mathcal{U}}_i} c_i(\edit{\mathbf{x}_i^+}, u_i)$ with our now potentially constrained control set $\overline{\mathcal{U}}_i \subseteq \mathcal{U}_i$ and compute the deficit in control responsibility again through \eqref{eq:control_deficit}. If $\delta_i < 0$, then we must repeat the \edit{collaboration} process again to try and distribute responsibility amongst neighbors, otherwise, Algorithm~\ref{alg:colab_safety} will halt with a feasibly constrained control set $\overline{\mathcal{U}}_i$. 

\begin{algorithm}
\caption{\edit{Collaborative Safety}}\label{alg:colab_safety}
    \begin{algorithmic}[1]
        \Initialize{
            $\bar{c}_{ij} \gets 0, \forall j \in \mathcal{N}_i^+; \bar{c}_{ki} \gets 0, \forall k \in \mathcal{N}_i^-$ \\
            $i \gets i_0; \overline{\mathcal{U}}_i \gets \mathcal{U}_i; \tau_i \gets 0$
        }
        \Repeat
            \State $\tau_i \gets \tau_i + 1$
            \State $\bar{c}_i \gets \max_{u_i \in \overline{\mathcal{U}}_i} c_i(\mathbf{x}_i^+, u_i)$
            \State $\delta_i,\, \overline{\mathcal{U}}_i, \{\bar{c}_{ij}\}_{j \in \mathcal{N}_i^+}, \{\bar{c}_{ki}\}_{k \in \mathcal{N}_i^-}$ 
            $\gets$ Collaborate$\left(\bar{c}_i, \overline{\mathcal{U}}_i, \{\bar{c}_{ij}\}_{j \in \mathcal{N}_i^+}, \{\bar{c}_{ki}\}_{k \in \mathcal{N}_i^-}\right)$
            \Until{$\delta_i \geq 0$}
        \State\Return $\overline{\mathcal{U}}_i$
    \end{algorithmic}
\end{algorithm}

\begin{algorithm}
\caption{\edit{Collaborate}}\label{alg:colaborate}
    \begin{algorithmic}[1]
        \Initialize{
            $i \gets i_0;\; W_i \gets \eqref{eq:W_i};\; \overline{\mathcal{N}}_i^+\gets \emptyset$
        }
        \Inputs{
            \edit{
                $\bar{c}_i,\, \overline{\mathcal{U}}_i, \{\bar{c}_{ij}\}_{j \in \mathcal{N}_i^+}, \{\bar{c}_{ki}\}_{k \in \mathcal{N}_i^-}$
            }
        }
        \Repeat
            \State $\delta_i \gets \bar{c}_i - \sum_{j \in \mathcal{N}_i^+}\bar{c}_{ij}$
            \edit{\State $\{\delta_{ij}\}_{j\in \mathcal{N}_i^+} \gets \left\{ \frac{\delta_i w_{ij}}{\sum_{l \in \mathcal{N}_i^+ \setminus \overline{\mathcal{N}}_i^+} w_{il}}\right\}_{j \in \mathcal{N}_i^+}$}
            \State SEND to each $j \in \mathcal{N}_i^+ \setminus \overline{\mathcal{N}}_i^+ : \delta_{ij}$
            \State RECEIVE from all $k \in \mathcal{N}_i^- : \edit{\{\delta_{ki}\}_{k \in \mathcal{N}_i^-}}$
            \State {$\overline{\mathcal{U}}_i, \edit{\{\bar{c}_{ki}, \varepsilon_{ki}\}_{k \in \mathcal{N}_i^-}} \gets$ Coordinate$\left(\{\bar{c}_{ki}, \delta_{ki}\}_{k \in \mathcal{N}_i^-}\right)$}
            \State SEND to each $k \in \mathcal{N}_i^- : \varepsilon_{ki}$
            \State RECEIVE from all $j \in \mathcal{N}_i^+: \edit{ \{\varepsilon_{ij}\}_{j \in \mathcal{N}_i^+}}$
            \For{$j \in \mathcal{N}_i^+$}
                \State $\bar{c}_{ij} \gets \bar{c}_{ij} + \delta_{ij} + \varepsilon_{ij}$ 
                \If{$\varepsilon_{ij} > 0$}
                    \State $\overline{\mathcal{N}}_i^+ \gets \overline{\mathcal{N}}_i^+ \cup \{ j\}$
                \EndIf
            \EndFor
        \Until{{\footnotesize $(\overline{\mathcal{N}}_i^+ = \mathcal{N}_i^+) \lor (\varepsilon_{ij} = 0, \forall j \in \mathcal{N}_i^+ \land \varepsilon_{ki} = 0, \forall k \in \mathcal{N}_i^-)$}}
        \State \Return $\delta_i,\, \overline{\mathcal{U}}_i, \{\bar{c}_{ij}\}_{j \in \mathcal{N}_i^+}, \{\bar{c}_{ki}\}_{k \in \mathcal{N}_i^-}$
    \end{algorithmic}
\end{algorithm}

\begin{algorithm}
\caption{\edit{Coordinate}}\label{alg:coordinate}
    \begin{algorithmic}[1]
        \Initialize{
            \edit{$i \gets i_0;\; \varepsilon_{ki} \gets 0;\; a_{ki}(\mathbf{x}_k) \gets \eqref{eq:a_ij},  \forall k \in \mathcal{N}_i^-$
           }
        }
        \Inputs{
            \edit{$\{\bar{c}_{ki}, \delta_{ki}\}_{k \in \mathcal{N}_i^-}$
           }
        }

        \For{$k \in \mathcal{N}_i^-$}
           \State $\overline{\mathcal{U}}_{ki} \gets \{ u_i \in \mathbb{R}^{M_i}: a_{ki}(\mathbf{x}_k) u_i + \bar{c}_{ki} + \delta_{ki} \geq 0 \}$
        \EndFor

        \State $\overline{\mathcal{U}}_{\mathcal{N}_i^-} \gets \bigcap_{k \in \mathcal{N}_i^-} \overline{\mathcal{U}}_{ki}$

        \If{$\mathcal{U}_i \cap \overline{\mathcal{U}}_{\mathcal{N}_i^-} \neq \emptyset$ }
            \State $\overline{\mathcal{U}}_i \gets \mathcal{U}_i \cap \overline{\mathcal{U}}_{\mathcal{N}_i^-}$

        \Else
            \State $\overline{u}_i \gets$ getClosestPoint$(\mathcal{U}_i, \overline{\mathcal{U}}_{\mathcal{N}_i^-})$ \citep{kaown2009fast}
            
            \State $\overline{\mathcal{U}}_i \gets \{ \overline{u}_i \}$  
      
            \For{$k \in \mathcal{N}_i^-$}
                \If{$a_{ki}(\mathbf{x}_k)\overline{u}_i + \bar{c}_{ki} + \delta_{ki} < 0$}
                    \State $\varepsilon_{ki} \gets -(a_{ki}(\mathbf{x}_k)\overline{u}_i + \bar{c}_{ki} + \delta_{ki})$
                \EndIf
            \EndFor
        \EndIf
        \State $\bar{c}_{ki} \gets \bar{c}_{ki} + \delta_{ki} + \varepsilon_{ki}, \forall k \in \mathcal{N}_i^-$
        \State \Return $\overline{\mathcal{U}}_i, \edit{\{\bar{c}_{ki}, \varepsilon_{ki}\}_{k \in \mathcal{N}_i^-}}$
    \end{algorithmic}
\end{algorithm}

\subsection{\edit{Algorithm Convergence}}
\edit{
To assist in analyzing the convergence of Algorithm~\ref{alg:colab_safety}, we provide the following definition on the relationship of incoming requests from neighbors $k \in \mathcal{N}_i^-$ for all nodes $i \in [n]$.
}
\edit{
\begin{definition}\label{def:weakly_non_interfering}
     For a given node~$i \in [n]$, the set of neighbor constraints $h_{k}(x_k)$ for $k \in \mathcal{N}_i^-$ are said to be \textit{weakly non-interfering} if there exists a vector $a \in \mathbb{R}^{M_i}$ such that $a\cdot ~a_{ki}(\mathbf{x}_k) > 0, \forall k \in \mathcal{N}_i^-$, where $a_{ki}(\mathbf{x}_k)= \mathcal{L}_{g_i} \mathcal{L}_{f_k}  h_k(\mathbf{x}_k)$.
\end{definition}
}
\edit{
We borrow the terminology of non-interfering safety constraints from \citep{breeden2022compositions} which presents a method for composing multiple safety conditions for a single system with double integrator dynamics. We use the terminology of \textit{weakly non-interfering} constraints in contrast with \textit{non-interfering} constraints in \citep{breeden2022compositions} due to the more relaxed condition in Definition~\ref{def:weakly_non_interfering} which requires only that all vectors $a_{ki}(\mathbf{x}_k)$ lay in the same halfspace, rather than requiring $a_{ki}(\mathbf{x}_k) \cdot a_{li}(\mathbf{x}_l) \geq 0, \forall k,l \in \mathcal{N}_i^-$. One reason for this relaxation lies in the strength of guarantees obtained through constructing barrier functions a priori to be jointly feasible, as is proposed in \citep{breeden2022compositions}, versus treating safety constraints as defined independently for each agent according to their safety requirements and using a collaborative scheme to find jointly feasible safe actions at runtime.
}
\edit{
Using this definition of weakly non-interfering constraints, we have the following lemma.
\begin{lemma} \label{lem:non_empty_constraints}
    If the set of constraints $h_{k}(x_k)$ for all $k \in \mathcal{N}_i^-$ are weakly non-interfering, then $\overline{\mathcal{U}}_{\mathcal{N}_i^-} \neq \emptyset, \forall i \in [n]$.
\end{lemma}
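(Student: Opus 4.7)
The plan is to exploit the finiteness of $\mathcal{N}_i^-$ together with the common halfspace vector guaranteed by the weakly non-interfering hypothesis to exhibit an explicit point in the intersection $\overline{\mathcal{U}}_{\mathcal{N}_i^-}$. Specifically, I expect a one-parameter family of candidate points, parameterized along the ray generated by the common vector $a$, to suffice: the key observation is that each halfspace $\overline{\mathcal{U}}_{ki}$ is cut out by a single linear inequality, so if the normals $a_{ki}(\mathbf{x}_k)$ all lie strictly in the halfspace $\{v : a \cdot v > 0\}$, then moving sufficiently far in the direction $a$ must satisfy every constraint simultaneously regardless of the offsets $\bar{c}_{ki}$.

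First, I would dispatch the trivial case $\mathcal{N}_i^- = \emptyset$, where the intersection over an empty index set is all of $\mathbb{R}^{M_i}$ by convention and hence non-empty. Next, assuming $\mathcal{N}_i^- \neq \emptyset$, I invoke Definition~\ref{def:weakly_non_interfering} to obtain a vector $a \in \mathbb{R}^{M_i}$ with $a \cdot a_{ki}(\mathbf{x}_k) > 0$ for every $k \in \mathcal{N}_i^-$. For the candidate $u_i = \lambda a$, the defining inequality of $\overline{\mathcal{U}}_{ki}$ reduces to the scalar condition $\lambda \bigl(a_{ki}(\mathbf{x}_k) \cdot a\bigr) + \bar{c}_{ki} \geq 0$, which is equivalent to $\lambda \geq -\bar{c}_{ki}/\bigl(a_{ki}(\mathbf{x}_k) \cdot a\bigr)$ since the coefficient of $\lambda$ is strictly positive.

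Because $\mathcal{N}_i^-$ is a finite index set, I can define
\begin{equation*}
    \lambda^\star := \max_{k \in \mathcal{N}_i^-} \max\!\left\{0,\; \frac{-\bar{c}_{ki}}{a_{ki}(\mathbf{x}_k) \cdot a}\right\},
\end{equation*}
which is a finite non-negative real number. Taking $u_i^\star = \lambda^\star a$ then satisfies every inequality $a_{ki}(\mathbf{x}_k) u_i^\star + \bar{c}_{ki} \geq 0$ simultaneously, so $u_i^\star \in \bigcap_{k \in \mathcal{N}_i^-} \overline{\mathcal{U}}_{ki} = \overline{\mathcal{U}}_{\mathcal{N}_i^-}$, establishing that this intersection is non-empty.

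I do not anticipate a substantive obstacle here; the argument is essentially the observation that a finite collection of closed halfspaces with a common interior direction has non-empty intersection. The only subtlety worth flagging in the write-up is that $\overline{\mathcal{U}}_{\mathcal{N}_i^-}$ is defined without the hard constraint $u_i \in \mathcal{U}_i$, so the conclusion is precisely about feasibility of the linear system from neighbor requests and not about feasibility within the actuator set; the latter is exactly what triggers the adjustment step \eqref{eq:adjust_request}--\eqref{eq:adjsut_request_needed} in Algorithm~\ref{alg:colaborate} and is outside the scope of this lemma.
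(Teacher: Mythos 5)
Your proposal is correct and follows the same route as the paper: both arguments use the common direction $a$ from Definition~\ref{def:weakly_non_interfering} to conclude that finitely many halfspaces whose normals all have strictly positive inner product with $a$ must intersect, regardless of the offsets. Your version is simply more explicit, exhibiting the witness $u_i^\star = \lambda^\star a$ where the paper merely asserts nonemptiness of the intersection.
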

\begin{proof}
    By Definition~\ref{def:weakly_non_interfering}, there must exist an open half-space in $\mathbb{R}^{M_i}$, defined by the vector $a \in \mathbb{R}^{M_i}$,
    \begin{equation*}
        \mathcal{U}_i^a = \{ u_i \in \mathbb{R}^{M_i}: a u_i > 0 \}
    \end{equation*}
    such that all vectors $a_{ki}(\mathbf{x}_k)$ for $k \in \mathcal{N}_i^-$ are contained in that halfspace. Thus, the convex hull created by the intersection of the halfspaces, defined by
    \begin{equation*}
        \overline{\mathcal{U}}_{ki} = \{ u_i \in \mathbb{R}^{M_i}: a_{ki}(\mathbf{x}_k) u_i + \bar{c}_{ki} + \delta_{ki} \geq 0 \},
    \end{equation*}
    must be nonempty in $\mathbb{R}^{M_i}$ for any values of $\bar{c}_{ki} + \delta_{ki}$.
\end{proof}
}
\edit{
In Figure~\ref{fig:weakly_non_interfering_proof_ex}, we illustrate an example of two jointly infeasible constraints for an agent $i\in [n]$ with $u_i \in \mathbb{R}^2$.
Additionally, we make the following assumption on the control constraints for node~$i \in [n]$.
\begin{assumption} \label{assume:U_i_convex_closed}
     For a given node $i\in [n]$, let $\mathcal{U}_i \subset \mathbb{R}^{M_i}$ be a nonempty, convex, closed set.
\end{assumption}
We use the following definition to describe pairs of neighbors that may cause certain safety requests to be infeasible for node~$i \in [n]$.
}
\edit{
\begin{definition}
    For a given node~$i \in [n]$, the neighbor constraint $h_{k}(x_k)$ is \textit{infeasible} if $\mathcal{U}_i \cap \overline{\mathcal{U}}_{ki} = \emptyset$. Further,
    the neighbor constraints $h_{k}(x_k)$ and $h_{l}(x_l)$ for $k,l \in \mathcal{N}_i^-$ are \textit{jointly infeasible} if $\mathcal{U}_i \cap \overline{\mathcal{U}}_{ki} \cap \overline{\mathcal{U}}_{li} = \emptyset$.
\end{definition}
}
\edit{
Infeasible and jointly infeasible constraints create the potential for no allowable control action from each agent in the system to satisfy all safety constraints.
}
\edit{
\begin{definition}
    We say that $x_1, \dots, x_n$ is a \textit{terminally infeasible state} if there does not exist a set of control inputs $u_1, \dots, u_n \in \mathcal{U}_1 \times \cdots \times \mathcal{U}_n$ that satisfies \eqref{eq:cNCBF_cond} for all $i \in [n]$.
\end{definition}
}
\edit{Using these definitions,} we have the following result on the convergence of Algorithm~\ref{alg:colab_safety}.
\begin{theorem} \label{thm:colab_alg_convergence}
    \edit{Let the set of constraints $h_{k}(x_k)$ for $k \in \mathcal{N}_i^-$ be weakly non-interfering and Assumptions~\ref{assume:u_dot_func_u} and \ref{assume:U_i_convex_closed} hold $\forall i \in [n]$. If $x_1, \dots, x_n$ is not a terminally infeasible state, then Algorithm~\ref{alg:colab_safety} will return at least one safe action for all nodes $i\in [n]$.} 
\end{theorem}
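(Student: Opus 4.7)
The plan is to decompose the argument into four tasks: (a) verifying that the Coordinate subroutine of Algorithm~\ref{alg:coordinate} produces a well-defined output; (b) showing that the inner loop of Collaborate (Algorithm~\ref{alg:colaborate}) terminates in finite time; (c) showing that the outer loop of Algorithm~\ref{alg:colab_safety} terminates under the non-terminally-infeasible hypothesis; and (d) verifying that the set $\overline{\mathcal{U}}_i$ returned by the algorithm contains at least one safe action. Steps (a), (b), and (d) should be relatively mechanical, while step (c) is the crux of the argument.

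For (a), I would appeal to Lemma~\ref{lem:non_empty_constraints}, which applies by the weakly non-interfering hypothesis, to ensure $\overline{\mathcal{U}}_{\mathcal{N}_i^-}$ is nonempty; since it is an intersection of affine halfspaces, it is also convex and closed. Combined with Assumption~\ref{assume:U_i_convex_closed} on $\mathcal{U}_i$, the closest-point routine from \citep{kaown2009fast} yields a well-defined $\overline{u}_i$ whenever $\mathcal{U}_i \cap \overline{\mathcal{U}}_{\mathcal{N}_i^-} = \emptyset$, and the adjustments $\varepsilon_{ki} \geq 0$ in \eqref{eq:adjsut_request_needed} are well-defined. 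For (b), the set $\overline{\mathcal{N}}_i^+$ is monotonically non-decreasing across Collaborate iterations and bounded by the finite set $\mathcal{N}_i^+$, so the termination clause $\overline{\mathcal{N}}_i^+ = \mathcal{N}_i^+$ (or the alternative of no further adjustments on either side) must be reached in at most $|\mathcal{N}_i^+|$ rounds.

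For (c), I would argue by contradiction: suppose the outer loop at some node $i$ never halts, so $\delta_i < 0$ persists after every Collaborate call. Whenever Collaborate exits with $\overline{\mathcal{N}}_i^+ = \mathcal{N}_i^+$, every incoming neighbor $j \in \mathcal{N}_i^+$ has already accepted the maximal responsibility compatible with its own $\mathcal{U}_j$ and its own incoming requests, while the outgoing-neighbor constraints on $i$ have been fully absorbed into $\overline{\mathcal{U}}_i$ via \eqref{eq:in_neighbor_constraint_k}--\eqref{eq:in_neighbor_constraint_all}. A persistent $\delta_i < 0$ under these conditions certifies that no choice of $(u_i, u_{\mathcal{N}_i^+}) \in \mathcal{U}_i \times \mathcal{U}_{\mathcal{N}_i^+}$ can satisfy \eqref{eq:cNCBF_cond} for $i$; propagating this reasoning across the simultaneously running copies of the algorithm produces non-existence of any joint $u \in \mathcal{U}$ satisfying \eqref{eq:cNCBF_cond} for every node, i.e., terminal infeasibility, contradicting the hypothesis. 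Finally for (d), once $\delta_i \geq 0$ triggers termination, any $u_i \in \overline{\mathcal{U}}_i$ attaining the maximum in \eqref{eq:max_cap} fulfills the CCBF condition for $i$ under the current allocation $\{\bar{c}_{ij}\}$, and membership in $\overline{\mathcal{U}}_i$ simultaneously satisfies each outgoing-neighbor request \eqref{eq:in_neighbor_constraint_k}.

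The main obstacle I expect is step (c): rigorously turning an exhausted local negotiation with persistent $\delta_i < 0$ into the global statement of terminal infeasibility, since the algorithm runs concurrently at every node and the allocations are coupled through the communicated values $\delta_{ki}$ and $\varepsilon_{ki}$. The key will be isolating a proper invariant on the maximum feasible responsibility each neighbor can absorb at the current network state, and then chaining those invariants through the communication graph so that the local failure at $i$ translates into infeasibility of the joint control problem. A secondary subtlety is ensuring that the simultaneous back-and-forth of requests does not oscillate; monotonicity of $\overline{\mathcal{N}}_i^+$ at each node and of the cumulative $\bar{c}_{ij}$ allocations should be the right progress measures.
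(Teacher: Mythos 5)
Your decomposition (a), (b), (d) matches the paper's proof in substance: Lemma~\ref{lem:non_empty_constraints} plus Assumption~\ref{assume:U_i_convex_closed} guarantee the closest-point computation in Coordinate is well posed, and the monotone growth of $\overline{\mathcal{N}}_i^+$ together with the monotonicity of the cumulative allocations are exactly the progress measures the paper relies on. So the overall strategy is the same. The problem is step (c), which you correctly identify as the crux and then leave unresolved: your proposal asserts that a persistent deficit $\delta_i < 0$ after an exhausted negotiation ``certifies'' terminal infeasibility, and defers the actual argument to ``isolating a proper invariant \dots and chaining those invariants through the communication graph.'' That deferred step is the theorem; without it the contradiction has no force. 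In particular, you have not ruled out that the concurrently running negotiations at different nodes settle into a configuration where every node's deficit is negative even though a joint feasible $u \in \mathcal{U}$ exists --- i.e., that the algorithm's particular allocation rule \eqref{eq:request_c} and adjustment rule \eqref{eq:adjsut_request_needed} get stuck at a suboptimal partition of responsibility.

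The paper closes this step with a geometric device you do not have: it observes that once a neighbor pair is infeasible, the compromise action $\overline{u}_i$ always lies on $\partial\mathcal{U}_i$, and it introduces the set of viable compromise-seeking actions $\overline{\partial\mathcal{U}}_i = \{u_i \in \partial\mathcal{U}_i : a_{ki}^{\perp}u_i \geq 0,\ \forall k \in \mathcal{N}_i^-\}$ (nonempty precisely because of weak non-interference) together with its per-iteration restriction $\overline{\partial\mathcal{U}}_i^{\tau}$. Because the communicated requests satisfy $\delta_{ki}^{\tau} \leq \delta_{ki}^{\tau+1} \leq 0$ --- a neighbor only re-requests after first trying to offload its residual deficit onto its own still-unconstrained neighbors --- these sets are nested and contracting, and the hypothesis that the state is not terminally infeasible is invoked exactly once, to conclude that the contraction cannot terminate in the empty set. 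Your contradiction framing and the paper's direct monotone-contraction argument are logically interchangeable, but the paper supplies the concrete object ($\overline{\partial\mathcal{U}}_i^{\tau}$ on the boundary of $\mathcal{U}_i$) whose monotone behavior converts the local negotiation into a statement about joint feasibility; your proposal names the need for such an object but does not construct it. To repair your write-up, you would need to (i) prove the monotonicity $\delta_{ki}^{\tau} \leq \delta_{ki}^{\tau+1} \leq 0$ from the structure of Algorithms~\ref{alg:colaborate} and \ref{alg:coordinate}, and (ii) exhibit the nested family of candidate compromise sets whose nonempty limit is forced by non-terminal-infeasibility.
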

\begin{proof}
    First, note the algorithm time $\tau_i \in \mathbb{N}$ for node $i\in [n]$, which tracks the number of total iterations carried out by \edit{the repeat loop of Algorithm~\ref{alg:colab_safety}}. 
    We first step Algorithm~\ref{alg:colab_safety} through $\tau_i=1$, then show for $\tau_i >~1$ that $\bar{c}_{ij}, \forall j \in \mathcal{N}_i^+$ and $\bar{c}_{ki}, \forall k \in \mathcal{N}_i^-$ are decreasing and lower bounded. 
    
    At $\tau_i=1$, \edit{each node~$i\in [n]$ computes its maximum safety capability by \eqref{eq:max_cap}, which is well defined by Assumption~\ref{assume:u_dot_func_u}.}
    If $\bar{c}_i > 0$, then node $i$ communicates a control surplus $\delta_{ij} \geq 0, \forall j \in \mathcal{N}_i^+$ , otherwise it will communicate a control deficit of $\delta_{ij} < 0, \forall j \in \mathcal{N}_i^+$. Once requests have been sent and received, each node processes them using Algorithm~\ref{alg:coordinate}, which initializes responsibility surpluses and deficits across the network. 
    
    Note that if \edit{$\exists k \in \mathcal{N}_i^-$ such that} $\varepsilon_{ki}>0$, then
    \edit{
     there exists at least one pair of infeasible neighbors in $\mathcal{N}_i^-$ and Algorithm~\ref{alg:coordinate} will have selected the closest point $\overline{u}_i \in \partial \mathcal{U}_i$ between the two nonempty convex hulls $\mathcal{U}_i$ and $\overline{\mathcal{U}}_{\mathcal{N}_i^-}$, where by $\mathcal{U}_i$ is nonempty by Assumption~\ref{assume:U_i_convex_closed} and $\overline{\mathcal{U}}_{\mathcal{N}_i^-}$ is nonempty by Lemma~\ref{lem:non_empty_constraints}. Thus, the control input which node~$i \in [n]$ may choose that would be the most beneficial for a given neighbor $k \in \mathcal{N}_i^-$ will be the point $u_i \in \partial \mathcal{U}_i$ where the hyperplane 
     \begin{equation*}
         a_{ki}(x) u_i + \bar{c}_{ki}^* = 0 
     \end{equation*}
     first intersects $\partial \mathcal{U}_i$, with $\bar{c}_{ki}^*$ chosen such that the solution
     \begin{equation*}
         \overline{u}_{ki}^* = \argmax_{u \in \mathcal{U}_i} a_{ki}(x) u_i + \bar{c}_{ki}^*
     \end{equation*}
     is unique, where such a solution must always exist by Assumption~\ref{assume:U_i_convex_closed}. Therefore, for infeasible neighbors in $\mathcal{N}_i^-$, node $i$ will adjust each neighbor's request $\delta_{ki}$ such that they intersect exactly on $\partial \mathcal{U}_i$.
     }

     \edit{
     If adjusted neighbors $k \in \mathcal{N}_i^-$ send another request $\delta_{ki}$ to node $i$, by Algorithm~\ref{alg:colaborate} it will only be \textit{after} attempting to allocate the remainder of  $\delta_k$ to the remaining unconstrained neighbors in $\mathcal{N}_k^+$. Thus, if $\tau_i^0$ is the algorithm time which node $i$ sends an adjustment to a given neighbor $k \in \mathcal{N}_i^-$, then $\delta_{ki}^{\tau_i} \leq \delta_{ki}^{\tau_i + 1} \leq 0$ for all $\tau_i^0 \leq \tau_i$.
     }

     \edit{
     Any viable compromise-seeking action for all infeasible neighbors, if it exists, will be contained in the subspace
     \begin{equation}
         \overline{\partial \mathcal{U}}_i = \{ u_i \in \partial \mathcal{U}_i: a_{ki}^{\perp} u_i \geq 0, \forall k \in \mathcal{N}_i^- \},
     \end{equation}
     where $a_{ki}^{\perp}$ are the vectors orthogonal to $a_{ki}$ such that $a \cdot a_{ki}^{\perp} \geq 0$, with $a \in \mathbb{R}^{M_i}$ being any vector that satisfies the property of \textit{weakly non-interfering} for constraints $h_{k}(x_k)$ for $k \in \mathcal{N}_i^-$. 
     At each subsequent iteration of Algorithm~\ref{alg:colab_safety}, where $\tau > 1$, the set of potential comprise points for infeasible neighbors is given by
     \begin{equation}
          \overline{\partial \mathcal{U}}_i^{\tau} = \{u_i \in \partial \mathcal{U}_i: a_{ki} u_i  + \bar{c}_{ki}^\tau + \delta_{ki}^\tau  < 0 , \forall k \in \mathcal{N}_i^-\} \cap \overline{\partial \mathcal{U}}_i
     \end{equation}
     (an example of $\overline{\partial \mathcal{U}}_i$ and $\overline{\partial \mathcal{U}}_i^{\tau}$ for two neighbors where $u_i \in \mathbb{R}^2$ is shown in Figure~\ref{fig:weakly_non_interfering_proof_ex}).
     Thus, since $\delta_{ki}^{\tau_i} \leq \delta_{ki}^{\tau_i + 1} \leq 0$, as $\tau \rightarrow \infty$, and since $x_1, \dots, x_n$ is not terminally infeasible, the set $\overline{\underline{\partial \mathcal{U}}}_i^{\tau}$ must contract to at least one feasibly safe action on $\partial \mathcal{U}_i$. 
     }
\end{proof}

\begin{figure}
    \centering
    \begin{overpic}[width=.35\columnwidth]{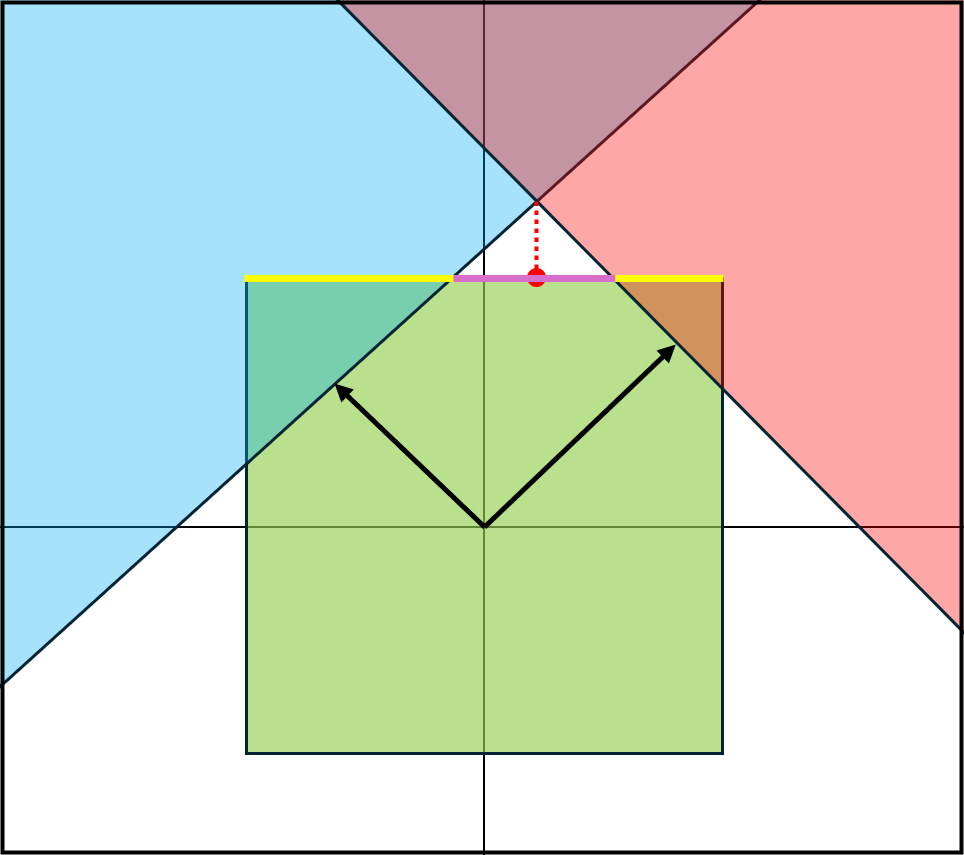}
    \put(-12,42){\parbox{0.75\linewidth}\normalsize \rotatebox{90}{$u_i^1$}}
    \put(48,-9){\parbox{0.75\linewidth}\normalsize{$u_i^2$}}
    \put(54,26){\parbox{0.75\linewidth}\normalsize{$\mathcal{U}_i$}}
    \put(29,33){\parbox{0.75\linewidth}\tiny \rotatebox{52}{$a_{1i}$}}
    \put(60,42){\parbox{0.75\linewidth}\tiny \rotatebox{-39}{$a_{2i}$}}
    \put(54,51){\parbox{0.75\linewidth}\normalsize{$\overline{u}_i$}}
    \put(75,65){\parbox{0.75\linewidth}\normalsize{$\overline{\mathcal{U}}_{2i}$}}
    \put(20,65){\parbox{0.75\linewidth}\normalsize{$\overline{\mathcal{U}}_{1i}$}}
    \put(43.4,81){\parbox{0.75\linewidth}\normalsize{\scalebox{.75}{$\overline{\mathcal{U}}_{1i} \cap \overline{\mathcal{U}}_{2i}$}}}
    \end{overpic}
    \vspace{4ex}
    \caption{
    \edit{
    An example of when node~$i \in [n]$ is constrained by two neighbors, where $u_i \in \mathbb{R}^2$. The constrained control space for agent~$i$, $\mathcal{U}_i$, is shaded green, with the feasibly safe control actions for neighbors 1 and 2 shaded in blue and red, respectively. Both $\overline{\mathcal{U}}_{1i}$ and $\overline{\mathcal{U}}_{2i}$ are individually feasible, but jointly infeasible, with the set of feasible, safe control actions for neighbors 1 and 2 shown in the purple-shaded region. The compromise-seeking action $\overline{u}_i \in \mathbb{R}^2$ is marked on the boundary of $\mathcal{U}_i$, which is the closest action in $\mathcal{U}_i$ to the feasible, safe control actions for both neighbors $\overline{\mathcal{U}}_{1i} \cap \overline{\mathcal{U}}_{2i}$. The set of all viable compromise-seeking actions $\overline{\partial \mathcal{U}}_i$ is shown by the yellow line and the current set of compromise-seeking actions $\overline{\partial \mathcal{U}}_i^\tau$ is shown by the pink line. 
    }
    } \label{fig:weakly_non_interfering_proof_ex}
\end{figure}

Since we are designing a \edit{decentralized} controller for each node, once safety needs have been communicated via Algorithm~\ref{alg:colab_safety} for each node~$i \in [n]$, control actions must then be selected independently of each other, yielding the safety-filtered control action for node $i$ computed as
\begin{equation} \label{eq:indpedendent_control}
    \begin{aligned}
        u_i^s(\mathbf{x}_i) = \argmin_{u_i \in \overline{\mathcal{U}}_i} \quad & {|| u_i - u_i^n(\mathbf{x}_i) ||}^2 \\
        \text{s.t.} \quad & \psi_i^1(\mathbf{x}_i, u_i) \geq 0,
    \end{aligned}
\end{equation}
where $ \overline{\mathcal{U}}_i$ is the constrained control set for node $i$ given by Algorithm~\ref{alg:colab_safety}. Thus, we yield the following \edit{decentralized} result for each node~$i \in [n]$.
\begin{corollary}
    If $\overline{\mathcal{U}}_i$ and $\overline{\mathcal{U}}_j$ computed by Algorithm~\ref{alg:colab_safety} are viable for $i \in [n]$ and $j \in \mathcal{N}_i^+$, respectively, for all $t \in \mathcal{T}$, then $\mathcal{C}_i^1 \cap \mathcal{C}_i^2$ is forward invariant under \eqref{eq:indpedendent_control}.
\end{corollary}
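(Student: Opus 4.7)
The plan is to reduce the corollary to Theorem~\ref{thm:cNCBF} by showing that, when both $\overline{\mathcal{U}}_i$ and the constrained sets $\overline{\mathcal{U}}_j$ for each incoming neighbor $j \in \mathcal{N}_i^+$ are viable, the decentralized control law \eqref{eq:indpedendent_control} produces a joint action $(u_i^s, u_{\mathcal{N}_i^+}^s)$ that satisfies the CCBF inequality \eqref{eq:cNCBF_cond} pointwise in time. Once this is established, Theorem~\ref{thm:cNCBF} immediately gives forward invariance of $\mathcal{C}_i^1 \cap \mathcal{C}_i^2$.

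First I would recall the partitioned form of $\psi_i^2$ in \eqref{eq:psi2_grouped_terms}, namely $\psi_i^2(\mathbf{x}_i^+, u_i, u_{\mathcal{N}_i^+}) = \sum_{j \in \mathcal{N}_i^+} a_{ij}(\mathbf{x}_i) u_j + c_i(\mathbf{x}_i^+, u_i)$, and the per-neighbor responsibilities $\bar{c}_{ij}$ returned by Algorithm~\ref{alg:colab_safety}, which at termination satisfy $\delta_i = \bar{c}_i - \sum_{j \in \mathcal{N}_i^+} \bar{c}_{ij} \geq 0$ by the halting condition. Using the definition $\bar{c}_i = \max_{u_i \in \overline{\mathcal{U}}_i} c_i(\mathbf{x}_i^+, u_i)$ from \eqref{eq:max_cap}, any $u_i^s \in \overline{\mathcal{U}}_i$ selected by \eqref{eq:indpedendent_control} will satisfy $c_i(\mathbf{x}_i^+, u_i^s) \geq \sum_{j \in \mathcal{N}_i^+} \bar{c}_{ij}$ after a one-line bookkeeping argument using the constraint $\psi_i^1(\mathbf{x}_i, u_i) \geq 0$ embedded in \eqref{eq:indpedendent_control}.

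Next I would invoke viability of $\overline{\mathcal{U}}_j$ for each neighbor: by construction in \eqref{eq:in_neighbor_constraint_k}--\eqref{eq:in_neighbor_constraint_all}, any $u_j^s \in \overline{\mathcal{U}}_j$ selected independently by neighbor $j$'s own instance of \eqref{eq:indpedendent_control} satisfies $a_{ij}(\mathbf{x}_i) u_j^s + \bar{c}_{ij} \geq 0$, since node $j$'s viable set respects the request forwarded from node $i$. Summing over $j \in \mathcal{N}_i^+$ and combining with the bound on $c_i$ yields
\begin{equation*}
\psi_i^2(\mathbf{x}_i^+, u_i^s, u_{\mathcal{N}_i^+}^s) = \sum_{j \in \mathcal{N}_i^+} \left(a_{ij}(\mathbf{x}_i) u_j^s + \bar{c}_{ij}\right) + \left(c_i(\mathbf{x}_i^+, u_i^s) - \sum_{j \in \mathcal{N}_i^+} \bar{c}_{ij}\right) \geq 0,
\end{equation*}
which is exactly the CCBF condition \eqref{eq:cNCBF_cond}. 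Since viability is assumed for all $t \in \mathcal{T}$, the inequality holds for every $\mathbf{x}_i \in \mathcal{C}_i^1 \cap \mathcal{C}_i^2$ along any resulting trajectory, so $h_i$ acts as a CCBF pointwise, and Theorem~\ref{thm:cNCBF} concludes the forward invariance of $\mathcal{C}_i^1 \cap \mathcal{C}_i^2$.

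The step I expect to be the most delicate is the bookkeeping that links $c_i(\mathbf{x}_i^+, u_i^s)$ to $\bar{c}_i$: the control $u_i^s$ selected by \eqref{eq:indpedendent_control} is not necessarily the maximizer in \eqref{eq:max_cap}, so care is needed to argue that any admissible choice from $\overline{\mathcal{U}}_i$ still preserves enough of the capability bound. The cleanest way to handle this is to observe that Algorithm~\ref{alg:colab_safety} restricts $\overline{\mathcal{U}}_i$ precisely so that the outgoing-neighbor constraints \eqref{eq:in_neighbor_constraint_k} remain satisfied, and to note that $\psi_i^1 \geq 0$ is itself enforced as an explicit constraint in \eqref{eq:indpedendent_control}; everything else is algebraic rearrangement of \eqref{eq:psi2_grouped_terms} and \eqref{eq:neighbor_responsibility}.
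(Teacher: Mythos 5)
Your overall route is the same as the paper's: reduce the corollary to Theorem~\ref{thm:cNCBF} by arguing that viability of the returned sets makes $h_i$ a CCBF, then conclude forward invariance. However, you try to prove something strictly stronger than what the paper (or the result) requires, and the extra step contains a genuine gap. Specifically, you claim that the control $u_i^s$ actually selected by \eqref{eq:indpedendent_control} satisfies $c_i(\mathbf{x}_i^+, u_i^s) \geq \sum_{j \in \mathcal{N}_i^+} \bar{c}_{ij}$, justified by "a one-line bookkeeping argument using the constraint $\psi_i^1(\mathbf{x}_i,u_i) \geq 0$." This does not follow. The termination condition $\delta_i \geq 0$ only guarantees $\bar{c}_i = \max_{u_i \in \overline{\mathcal{U}}_i} c_i(\mathbf{x}_i^+, u_i) \geq \sum_j \bar{c}_{ij}$, i.e., the bound holds for the \emph{maximizer} of $c_i$ over $\overline{\mathcal{U}}_i$, not for an arbitrary feasible point of the QP. The QP's constraint $\psi_i^1 \geq 0$ controls the first-order quantity $\dot{h}_i + \eta_i(h_i)$, whereas $c_i$ in \eqref{eq:capability_node_i} is built from second-order terms ($\mathcal{L}^2_{f_i}h_i$, $u_i^\top \mathcal{L}^2_{g_i}h_i u_i$, $\mathcal{L}_{g_i}h_i\, d(u_i)$, $\dot\eta_i$, $\kappa_i(\cdot)$); there is no monotone relationship between the two that would let you transfer the bound from the maximizer to $u_i^s$. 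So your displayed chain establishing $\psi_i^2(\mathbf{x}_i^+, u_i^s, u_{\mathcal{N}_i^+}^s) \geq 0$ for the \emph{selected} controls is not proved.

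The good news is that this step is unnecessary. The paper's proof only uses the hypothesis of viability to assert \emph{existence} of some $(u_i, u_{\mathcal{N}_i^+}) \in \mathcal{U}_i \times \mathcal{U}_{\mathcal{N}_i^+}$ with $\psi_i^2(\mathbf{x}_i^+, u_i, u_{\mathcal{N}_i^+}) \geq 0$ at every $\mathbf{x}_i \in \mathcal{C}_i^1 \cap \mathcal{C}_i^2$ along the trajectory; that is exactly the CCBF condition \eqref{eq:cNCBF_cond}, so Theorem~\ref{thm:cNCBF} applies and guarantees that $\psi_i^1(\mathbf{x}_i, u_i) \geq 0$ is feasible on $\mathcal{C}_i^1 \cap \mathcal{C}_i^2$. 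Since \eqref{eq:indpedendent_control} enforces $\psi_i^1 \geq 0$ as a hard constraint, the selected $u_i^s$ need only be feasible, not capability-maximizing. Your neighbor-side observation (that any $u_j^s \in \overline{\mathcal{U}}_j$ satisfies $a_{ij}(\mathbf{x}_i)u_j^s + \bar{c}_{ij} \geq 0$ by construction of \eqref{eq:in_neighbor_constraint_k}--\eqref{eq:in_neighbor_constraint_all}) is correct and is a useful way to see why existence holds, but to close the argument you should sum \eqref{eq:neighbor_responsibility} against the \emph{maximizing} $u_i$ in \eqref{eq:max_cap}, use $\delta_i \geq 0$ to get $\psi_i^2 \geq 0$ for that particular pair, and then hand off to Theorem~\ref{thm:cNCBF} rather than asserting the inequality for the decentralized selections themselves.
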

\begin{proof}
    For a given $\mathbf{x}_i \in \mathcal{C}_i^1 \cap \mathcal{C}_i^2$, if Algorithm~\ref{alg:colab_safety} returns with a viable $\overline{\mathcal{U}}_i$ then there exists $u_i, u_{\mathcal{N}_i^+} \in \mathcal{U}_i \times \mathcal{U}_{\mathcal{N}_i^+}$ such that
    \begin{equation} \label{eq:psi_2_cond}
        \psi_i^2(\mathbf{x}_i^+, u_i, u_{\mathcal{N}_i^+}) \geq 0.
    \end{equation} 
    Thus, by Theorem~\ref{thm:cNCBF}, if \eqref{eq:psi_2_cond} holds $\forall t \in \mathcal{T}$ then $\exists u_i \in \mathcal{U}_i$ such that $\psi_i^1(\mathbf{x}_i, u_i) \geq 0$. Therefore, $\mathcal{C}_i^1 \cap \mathcal{C}_i^2$ will be forward invariant $\forall t \in \mathcal{T}$ under \eqref{eq:indpedendent_control}.
\end{proof}

\section{Application: Networked SIS} \label{sec:applications}
To illustrate the theoretical results from Sections~\ref{sec:safe_w_local_control} and \ref{sec:colab_safety}, 
we apply our results to a networked susceptible-infected-susceptible (SIS) model for epidemic processes. While networked compartmental epidemic models can grow rapidly in complexity to model complex network interactions (such as multi-layered transportation networks \citep{vrabac2021capturing}) and disease behaviors (such as asymptomatic carriers and delayed symptoms \citep{butler2023optimal, zhang2023estimation, butler2021effect}), we choose a networked SIS model for the benefit of simplicity in demonstrating the results of this work. 
We define a networked system with $n$ nodes where $x_i$ is the proportion of infected population at node $i\in [n]$, with the infection dynamics defined by

\begin{equation} \label{eq:sis_networked}
    \dot{x}_i = -(\gamma_i+u_i) x_i + (1-x_i)\sum_{j \in [n]}\beta_{ij} x_j
\end{equation}

\noindent where $\gamma_i>0$ is the recovery rate at node $i$, $u_i \in \mathcal{U}_i \subset \mathbb{R}_{\geq 0}$ is a control input boosting the healing rate at node $i$, and $\beta_{ij} \geq 0$ is the networked connection going from node $j$ to node $i$ (note $\beta_{ii}$ is simply the infection rate occurring at node $i$). Note that we could also add a control term that reduces the infection rate $\beta_{ii}$ at each node $i$; however, we only consider one control term on the healing rate for simplicity. We also define the full state vector as $x \in [0,1]^n$ and the vector of all control inputs $u \in \mathbb{R}_{\geq 0}$. Let $\mathcal{U}_i = [0,\bar{u}_i]$, where $\bar{u}_i$ is the upper limit on boosting the healing rate at node $i$. We let each node define its individual safety constraint set as
\begin{equation} \label{eq:sis_indv_bar_func}
    h_i(x_i) = \bar{x}_i - x_i
\end{equation}
where $\bar{x}_i \in (0,1]$ is the defined safety threshold for the acceptable proportion of infected individuals at node $i$. Thus, the individual safe sets for each node are given by
\begin{equation}\label{eq:sis_indv_safe_set}
    \mathcal{C}_i = \{ x \in [0,1]^n: h_i(x_i) \geq 0 \}.
\end{equation}
\edit{Additionally, we select linear class-$\mathcal{K}$ functions $\eta_i(z) := \eta_i z$ and $\kappa_i(z) := \kappa_i z$, where $\eta_i, \kappa_i \in \mathbb{R}_{\geq 0}$, in the construction of the high-order barrier function safety conditions in \eqref{eq:MO_funcs}.}

We now demonstrate the performance of Algorithm~\ref{alg:colab_safety} in ensuring network safety for a simulated networked SIS epidemic process defined in \eqref{eq:sis_networked} using \edit{decentralized} control. For this example, we construct a simple 3-node system where $\beta_{ij} = 0.25$ and $\beta_{ii} = 0.5$ for all $j \neq i$ and $j = i$, respectively. To induce an endemic state, we set the healing rate $\gamma_i = 0.3$ for all nodes. 
In order to compute $\ddot{h}_i$ from \eqref{eq:sec_der_h_i_control_aff} using the networked SIS dynamics from \eqref{eq:sis_networked} we compute the following Lie derivative terms:
\begin{align*}
    \mathcal{L}_{f_i} h_i(x) &= - f_i(x) \\
    \mathcal{L}_{g_i} h_i(x) &= x_i \\
    \mathcal{L}_{f_i}^2 h_i(x) &= (\gamma_i - (1-2 x_i)\beta_{ii})f_i(x) \\
    \mathcal{L}_{g_i}^2 h_i(x) &= -x_i \\
    \mathcal{L}_{f_j}\mathcal{L}_{f_i}h_i(x) &= -(1-x_i)\beta_{ij}x_j f_j(x) \\
    \mathcal{L}_{g_j}\mathcal{L}_{f_i}h_i(x) &= (1-x_i)\beta_{ij}x_j^2 \\
    \mathcal{L}_{g_i}\mathcal{L}_{f_i}h_i(x) &= \left((1- 2 x_i)\beta_{ii}-\gamma_i - \sum_{j \in \mathcal{N}_i^+}\beta_{ij}x_j\right) x_i \\
    \mathcal{L}_{f_i}\mathcal{L}_{g_i}h_i(x) &= f_i(x).
\end{align*}

We set the safety constraints for each node as $\bar{x}_1 = 0.1$, $\bar{x}_2 = 0.12$, and $\bar{x}_3 = 0.18$, respectively, and constrain the control input to $\mathcal{U}_i = [0,0.75]$ for all $i \in [n]$. First, we test the performance of Algorithm~\ref{alg:colab_safety} in computing viable control constraints for each node given the safety needs of each neighbor. We simulate the system using Rung-Kutta numerical integration and at each time step of the simulation we have each node collaborate to communicate safety needs. Note, for \edit{any node $i\in [n]$, we have for any $x \in [0,1]^n$},
\begin{equation*}
    \mathcal{L}_{g_i}\mathcal{L}_{f_k}h_k(x) \cdot \mathcal{L}_{g_i}\mathcal{L}_{f_l}h_l(x) \geq 0, \forall k,l \in \mathcal{N}_i^-,
\end{equation*}
since \edit{$\mathcal{L}_{g_i}\mathcal{L}_{f_k}h_k(x) = (1-x_k)\beta_{ki}x_i^2 \geq 0$
for all $i \in [n]$ and $k \in \mathcal{N}_i^-$.} \edit{Thus, any set of requests from any neighbors will be weakly non-interfering by Definition~\ref{def:weakly_non_interfering}.}
Therefore, by Theorem~\ref{thm:colab_alg_convergence}, we have that Algorithm~\ref{alg:colab_safety} will always converge to either a feasibly constrained control set for each node, or at least one node will \edit{be in} a terminally infeasible state.

Once control constraints are computed via \edit{Algorithm~\ref{alg:colab_safety}}, each node selects a control action via \eqref{eq:indpedendent_control}. Using the initial condition of $x_0 = [0.04, 0.01, 0.02]$, we show in Figure~\ref{fig:eta_kappa_examples} the state trajectories and corresponding control inputs for systems.
\edit{
In this simulation, we see that node~1 reaches its healing rate upper bound as it approaches its maximum safe infection threshold; however, since node~3 has a higher infection capacity, it does not exert extra control until requested by node~1. Thus, with cooperation from node~3, node~1 can satisfy its safety requirements, whereas, without collaboration, node~1 would exceed its infection capacity. Note that it may be possible for no feasibly safe control action to exist if the requirements are too strict or the control budget $\mathcal{U}_i$ is too small, which could induce a terminally unfeasible state.  
}

\begin{figure}
    \centering
    \begin{subfigure}[b]{0.55\textwidth}
        \centering
        \includegraphics[width=\textwidth]{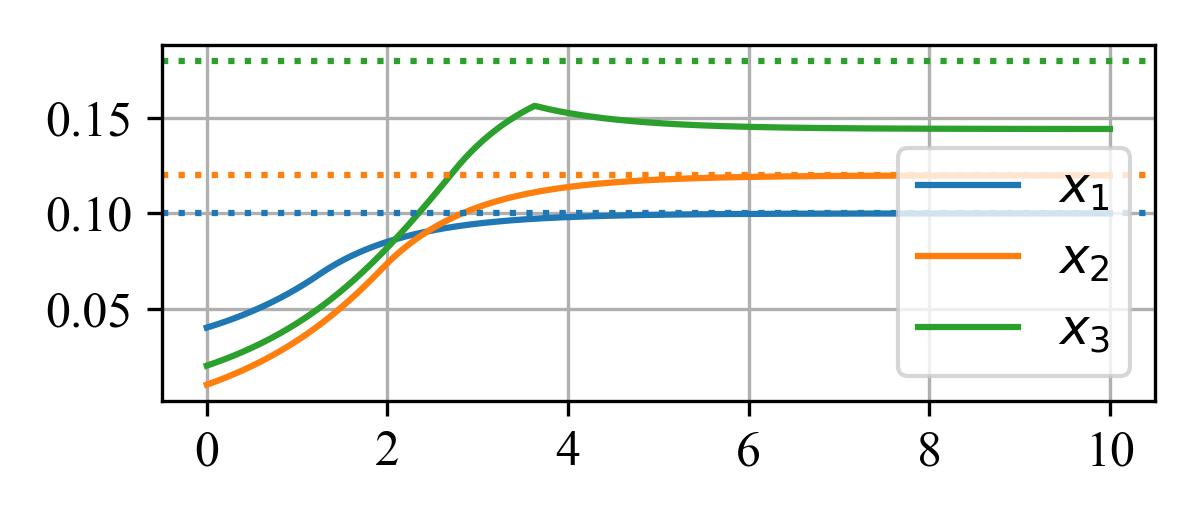}
    \end{subfigure}
    \begin{subfigure}[b]{0.55\textwidth}   
        \centering 
        \includegraphics[width=\textwidth]{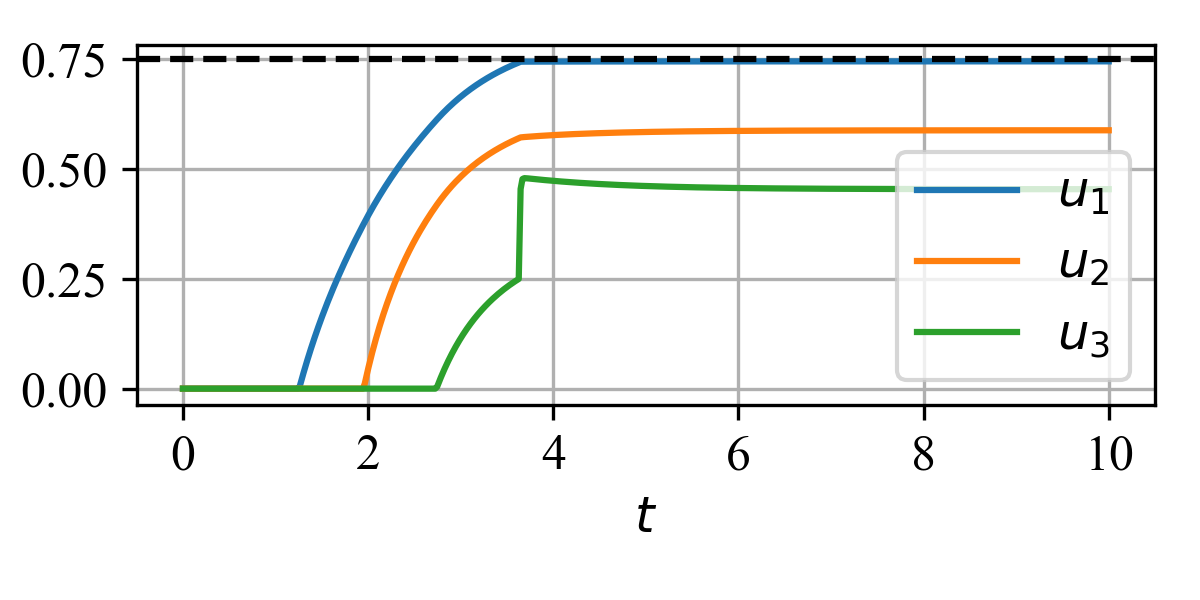}
    \end{subfigure}
    \vspace{-2ex}
    \caption{Simulated collaborative \edit{decentralized} controllers for a networked SIS model with $n=3$ where dotted lines represent each node's safety constraints $\bar{x}_i$ and the black dashed line showing the input constraint of $\mathcal{U}_i = [0, 0.75]$ for all nodes $i \in [n]$, respectively. 
    } 
    \label{fig:eta_kappa_examples}
\end{figure}

\section{Conclusion} \label{sec:conclusion}
In this work, we have presented methods for the \edit{decentralized}, collaborative safety-critical control of networked dynamic systems that exploit networked dynamics and structure to ensure individual safety goals are met. 
This novel approach leverages knowledge of networked dynamics in order to achieve stronger runtime assurances that tractably scale with network growth, which is of utmost importance as systems become more interconnected and prone to widespread failures.
While theory on safety-critical control of networks has been presented in this work, many directions of exploration and development remain open. 
Additional directions for future work include considering network influences beyond the 1-hop neighborhood as well as considerations of edge control in the coupling dynamics of networked models.

\bibliographystyle{unsrtnat}
\bibliography{references}  

\end{document}